\documentclass[12pt,a4paper,reqno]{amsart}
\usepackage{amsfonts}
\usepackage{amsthm}
\usepackage[normalem]{ulem}
\usepackage{amsmath,amsfonts,amssymb,amsthm,accents}
\usepackage{mathtools}

\usepackage{mathrsfs}
\usepackage{array} 
\usepackage{comment}
\usepackage{indentfirst}
\allowdisplaybreaks
\usepackage{setspace}
\usepackage{amscd}
\usepackage[latin2]{inputenc}

\usepackage{enumerate}
\usepackage[mathscr]{eucal}
\usepackage{enumitem}
\usepackage{bbm}
\setlength{\parindent}{12pt}
\usepackage[margin=2.3cm]{geometry}
\usepackage[bookmarks=false]{hyperref}
\hypersetup{
	colorlinks=true,
	linkcolor=blue,
	filecolor=blue,
	citecolor = blue,      
	urlcolor=cyan,
}

\theoremstyle{plain}
\newtheorem{Th}{Theorem}[section]

\newtheorem{Theorem}[Th]{Theorem}

\theoremstyle{definition}
\newtheorem{Lemma}[Th]{Lemma}
\newtheorem{Cor}[Th]{Corollary}

\newtheorem{Rem}[Th]{Remark}
\newtheorem{?}[Th]{Problem}

\newtheorem{Result}[Th]{Result}

\theoremstyle{definition}
\newtheorem{Fact}[Th]{Fact}

\theoremstyle{remark}
\newtheorem*{notations}{Notations}

\newcolumntype{x}[1]{%
	>{\centering\hspace{0pt}}m{#1}}%

\newcommand{\tnhl}{\tabularnewline\hline}

\newcommand{\ra}{\rightarrow}

\newcommand{\e}{\mathbb{E}}
\newcommand{\inte}{[0,\delta]}
\newcommand{\p}{\mathbb{P}}
\newcommand{\re}{{\mathbb{R}}}
\newcommand{\z}{\mathbb{Z}}

\newcommand{\ga}{\gamma}

\newcommand{\st}{\subseteq}

\newcommand{\T}{\mathscr{T}}

\newcommand{\lb}{\left(}
\newcommand{\rb}{\right)}
\newcommand{\nat}{\mathbb{N}}
\newcommand{\ep}{\epsilon}
\newcommand{\tm}{\textcolor{magenta}}
\newcommand{\nt}{\mathscr{N}_T}
\newcommand{\lt}{\mathscr{L}_T}
\newcommand{\deltat}{T}
\newcommand{\nn}{\mathscr{N}}
\newcommand{\murad}{\mu_{r}}
\newcommand{\lnn}{L_{n}^{1/n}}

\begin{document}
	\onehalfspacing
	\title[Overcrowding estimates]{Overcrowding Estimates for zero count and nodal length of stationary Gaussian processes}
	\author{Lakshmi Priya}
	\address{Department of Mathematics, Indian Institute of Science, Bangalore 560012, India}
	\email{lakshmip@iisc.ac.in}
	
	\thanks{This work is supported  by CSIR-SPM fellowship (File No. SPM-07/079(0260)/2017-EMR-I)), CSIR, Government of India and by a UGC CAS-II grant (Grant No. F.510/25/CAS-II/2018(SAP-I))}
	\begin{abstract}
		{Assuming certain conditions on the spectral measures of centered stationary Gaussian processes on $\re$ (or $\re^2$), we show that the probability of the event that their zero count  in an interval (resp., nodal length in a square domain) is larger than $n$, where $n$ is much larger than the expected value of the zero count in that interval (resp., nodal length in that square domain), is exponentially small in $n$. }
		\end{abstract}
	\maketitle
	\section{Introduction}
\subsection{Main Results}
	In this paper we consider centered stationary Gaussian processes on $\re$/$\re^2$, study the unlikely event that there is an excessive zero count/nodal length in a region and obtain probability estimates for them.  
Let $g:\re^2 \ra \re$ be a $C^1$ function such that for every $z\in \re^2$ we have $|g(z)|+|\nabla g(z)| \neq 0$. Then its zero/nodal set  $\mathcal{Z}(g) := g^{-1}\{0\}$ is a smooth one-dimensional  submanifold. Let $f:\re \ra \re$ be any function and $T>0$, we define $\nt(f)$ and $\lt(g)$ to be the zero count of $f$ in $[0,T]$ and the nodal length of $g$ in $[0,T]^2$ respectively, that is 
\begin{align*}
\nt(f) &:= \#\{x\in [0,T]: f(x) = 0\},\\
\lt(g)&:= \text{Length}\{z\in [0,T]^2: g(z)=0\}.
\end{align*}	
When there is no ambiguity of the function under consideration, we simply use $\nt$ and $\lt$. 

Let $X$ be a centered stationary Gaussian process on $\re$/$\re^2$ whose spectral measure $\mu$ is a symmetric Borel probability measure. Stationarity of $X$ implies that $\e[\nt] \propto T$ and $\e[\lt] \propto T^2$, under the assumption that $\mu$ has a finite second moment these expectations are also finite. Under some assumptions on $\mu$, we get probability estimates for the unlikely events  $\nt \gg \e[\nt]$ and $\lt \gg \e[\lt]$. The following are the assumptions we make on $\mu$ and their significance.
\begin{itemize}[align=left,leftmargin=*,widest={7}]
	\item \textit{All moments $C_n$\eqref{defns}/$C_{m,n}$\eqref{defns2} of $\mu$ are finite}: The higher order moments $C_n$ of $\mu$ control the  higher order derivatives of $X$,  hence a control on $C_n$ gives a control on the \textit{oscillations} of $X$, which in turn controls the number of $0$-crossings of $X$. 
	\vskip .2cm
	\item \eqref{a1}/\eqref{a2}: The event $\nt \gg \e[\nt]$/$\lt \gg \e[\lt]$ is intimately related to the event that $X$ takes \textit{small} values in a region (small ball event). Under assumption \eqref{a1}/\eqref{a2}, the small ball event becomes highly unlikely and it is possible to estimate their probabilities.  
	\end{itemize} 
Lemmas \ref{resmain} and \ref{newlenlem} are the quantitative versions of the above statements.

\begin{notations}Before stating the main results, we  introduce some useful notations.
	
	\begin{enumerate}[label={\arabic*.},align=left,leftmargin=*,widest={7}]
		\item Let $\mu$ be a positive Borel measure on $\re$. For $n \in \nat$, we define $C_n$ and $D_n$  by
		
		\begin{equation}\label{defns}
		\begin{gathered}
		C_n := \int_{\re} |x|^n d\mu(x)\text{ and }D_n:= \max\{1, \sqrt{C_{2n}}, \sqrt{C_{2n+2}}\}.
		\end{gathered}
		\end{equation}
		\item For $\mu$ a positive Borel measure on $\re^2$, we denote by $\murad$ the push forward of $\mu$ by the map $\phi$, where $\phi: \re^2 \ra [0,\infty)$ is defined by $\phi(x,y) := \sqrt{x^2 +y^2}$. For $m,n \in \nat \cup \{0\}$, we define the following quantities
		\begin{equation}\label{defns2}
		\begin{gathered}
		C_{m,n} := \int_{\re^2} |x|^m |y|^n d\mu(x,y)\text{ and } \widetilde{R}_n := \max\{\sqrt{C_{k,2n-k}}: 0 \leq k \leq 2n\},\\
		R_n := \max\{1,\widetilde{R}_1, \widetilde{R}_n, \widetilde{R}_{n+1}\}.\\
		\widetilde{L}_n := \lb \int_{0}^{\infty} t^{2n} d\murad(t)\rb^{1/2} \text{ and }
		L_n := \max\{1,\widetilde{L}_1, \widetilde{L}_n, \widetilde{L}_{n+1}\}.
			\end{gathered}
		\end{equation}
		The relation between the moments $L_n$ and $R_n$ is discussed in Remark \ref{remasymp}.
		\vskip .2cm
	\item Let $\mu$ be a positive Borel measure on $\re$/$\re^2$. Then $\mu$ is said to satisfy assumption $(A1)$/ $(A2)$ if: 
		\begin{equation}
		\parbox{.85\textwidth}{$\mu$ is a symmetric Borel probability measure on $\re$ which has a nontrivial absolutely continuous part w.r.t. the Lebesgue measure on $\re$. }\tag{$A1$} \label{a1} 
		\end{equation}
		\begin{equation}
		\parbox{.85\textwidth}{$\mu$ is a symmetric Borel probability measure on $\re^2$  such that there exist $v_1$, $v_2 \in \mathbb{S}^1$ satisfying $v_1 \perp v_2$ and the marginals of $\mu$ on $\re v_1$ and $\re v_2$ satisfy \eqref{a1}.}\label{a2} \tag{$A2$}
		\end{equation}
		\end{enumerate}
\end{notations}
\vskip .2cm

\begin{Theorem} \label{thmonedim} Let $X$ be a centered stationary Gaussian process on $\re$ whose  spectral measure $\mu$ is such that it satisfies \eqref{a1} and all its moments $C_n$ are finite. Then there exist constants $c>0$, $b, B \in (0,1)$ such that for every $\ep \in (0,1/2)$, $n \in \nat$ such that $n \geq 1/\ep^2$ and {$T \in (0,b \lfloor \ep n \rfloor)$}
	satisfying $(B/D_{n}^{1/n})\cdot (n/\deltat)^{1-2\ep} \geq e$, we have
	\begin{align}\label{mainestimate0}
	\exp(-n^2 \log(cn/T)) \leq \p(\nt \geq n) \leq 2 \exp \left\{ -\frac{\ep n^2}{2} \log\left(\frac{B}{D_{n}^{1/n}} \lb \frac{n}{\deltat} \rb^{1-2\ep} \right) \right\}.
	\end{align}
\end{Theorem}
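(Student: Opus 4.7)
The plan is to split the proof into the upper and lower inequalities. The upper bound is the substantial direction, and it hinges on Lemma~\ref{resmain} --- the quantitative small-ball estimate under assumption \eqref{a1} promised in the introduction --- so the task is to reduce the excess-zeros event $\{\nt \geq n\}$ to a small-ball event of the form $\{\sup_{[0,T]}|X| \leq \delta\}$ with an appropriate $\delta$ depending on $n$, $T$, and $D_n$.

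For the upper bound, the reduction uses Rolle's theorem iteratively: if $X$ has at least $n$ zeros in $[0,T]$, then for each $0 \leq k \leq n-1$, $X^{(k)}$ has at least $n-k$ zeros in $[0,T]$. Picking a zero of $X^{(k)}$ and integrating once yields $\|X^{(k)}\|_{L^\infty[0,T]} \leq T \|X^{(k+1)}\|_{L^\infty[0,T]}$, and iterating $n$ times gives
\[
\sup_{[0,T]} |X| \;\leq\; \frac{T^n}{n!}\,\sup_{[0,T]}|X^{(n)}|.
\]
Since $X^{(n)}$ is again a centered stationary Gaussian process with pointwise variance $C_{2n}$ and whose derivative has variance $C_{2n+2}$, a Borell--TIS plus Dudley-type argument yields $\|X^{(n)}\|_{L^\infty[0,T]} \leq K D_n$ outside an event of probability $\exp(-cK^2)$. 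Choosing $K$ of order $\sqrt{\ep}\,n$ makes this exceptional contribution negligible compared with the target, while on the complementary event Stirling gives $\sup|X| \leq K D_n (eT/n)^n$. Feeding this radius into Lemma~\ref{resmain}, which I expect to yield a bound logarithmic in the inverse radius, and then taking $n$-th roots, transforms $D_n(eT/n)^n$ into $D_n^{1/n}(eT/n)$, reproducing the stated factor $(B/D_n^{1/n})(n/\deltat)^{1-2\ep}$ after one final optimization in $\ep$.

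For the lower bound I would use the classical alternating-signs construction. Pick $n+1$ equispaced points $t_0 < \cdots < t_n$ in $[0,T]$; on the event $\mathcal{E} = \{\mathrm{sgn}\,X(t_i) = (-1)^i\text{ for all }i\}$, the intermediate value theorem guarantees $\nt \geq n$. Writing the Gaussian vector $(X(t_0),\dots,X(t_n))$ through its covariance matrix $\Sigma$ with entries $r(t_i-t_j)$, a direct estimate of the Gaussian density over the appropriate orthant --- equivalently, an estimate of $\det\Sigma$ via Taylor expansion of the covariance, which produces a Vandermonde-type factor when the $t_i$ are tightly packed --- gives $\det\Sigma \lesssim (T/n)^{n(n-1)}$ and hence $\p(\mathcal{E}) \geq \exp\bigl(-n^2 \log(cn/T)\bigr)$ for a suitable $c>0$.

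The principal obstacle is the careful tracking in the upper bound of how the small-ball radius depends on $n$, $T$ and $D_n$ through the iterated Rolle/Stirling step, so that the moment appears as $D_n^{1/n}$ --- rather than some cruder power of $D_n$ --- inside the final logarithm. The hypotheses $n \geq 1/\ep^2$, $T < b\lfloor \ep n\rfloor$, and $(B/D_n^{1/n})(n/\deltat)^{1-2\ep} \geq e$ are exactly the balances needed to ensure that the $\|X^{(n)}\|_\infty$ tail is dominated by the small-ball term and that the logarithm appearing in Lemma~\ref{resmain} is positive; making these balances quantitative will be technical but routine given the ingredients above.
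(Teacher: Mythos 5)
Your upper-bound strategy is structurally the same as the paper's (reduce $\{\nt\geq n\}$ to a small-ball event via Rolle, bound $\|X^{(n)}\|_\infty$ by Dudley/Borell--TIS, feed the resulting radius into the small-ball lemma), but the crucial step as written does not hold. Iterating ``$\|X^{(k)}\|_{L^\infty[0,T]}\leq T\|X^{(k+1)}\|_{L^\infty[0,T]}$'' $n$ times gives only $\sup_{[0,T]}|X|\leq T^n\sup_{[0,T]}|X^{(n)}|$, \emph{without} the $n!$. The $1/n!$ is not a cosmetic improvement: it turns the small-ball radius $KD_nT^n$ into $KD_nT^n/n!\approx KD_n(eT/n)^n$, and it is exactly this extra factor $(e/n)^n$ that makes the small-ball bound overcome the entropy prefactor $(Cm/T)^{m^2}$. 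Without it, the final exponent has the wrong sign. To actually obtain the $n!$ you need either the polynomial interpolation remainder formula $f(t)=\frac{f^{(n)}(\xi)}{n!}\prod_i(t-x_i)$ (which does give $\sup_{[0,T]}|X|\leq\frac{T^n}{n!}\|X^{(n)}\|_{L^\infty[0,T]}$ and is, if anything, cleaner than the paper's version), or the paper's Lemma~\ref{resmain}, where one orders the Rolle zeros $\alpha_{n-1}\leq\cdots\leq\alpha_0$ in $[0,T]$ and evaluates $|f|$ on the \emph{disjoint} interval $[T,2T]$, where every $t$ dominates all the $\alpha_k$ so that each integral picks up a factor of order $t/k$ rather than $T$.

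For the lower bound, the alternating-sign event and the reduction to an orthant probability for the density $\phi_n$ is exactly the paper's argument, but the chain of implications you write --- ``$\det\Sigma\lesssim(T/n)^{n(n-1)}$ and hence $\p(\mathcal{E})\geq\exp(-n^2\log(cn/T))$'' --- has a genuine gap. An \emph{upper} bound on $\det\Sigma$ only controls the prefactor $|\Sigma|^{-1/2}$ of the Gaussian density; it says nothing about the exponent $-\tfrac12\langle\Sigma^{-1}x,x\rangle$, which can be $-\infty$ on most of the orthant if $\Sigma$ is (nearly) degenerate. And it \emph{is} degenerate without assumption \eqref{a1}: if $\mu$ were supported on $k\leq n$ atoms, the covariance would have rank $\leq 2k < n+1$, $\det\Sigma=0$, and $\p(\mathcal{E})$ could be zero --- yet your Taylor-expansion bound on $\det\Sigma$ would still hold. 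The missing ingredient is a \emph{lower} bound on $\lambda_{\min}(\Sigma)$, which the paper extracts from \eqref{a1} via Tur\'an's lemma (Result~\ref{l2}): $\lambda_{\min}\geq(cT/n)^{2n}$. Combined with the crude bound $|\Sigma|\leq1$ (Gram matrix with unit diagonal), this gives $\phi_n(x)\geq(2\pi)^{-(n+1)/2}\exp(-\|x\|^2/2\lambda_{\min})$, and integrating over the orthant then produces $\p(\mathcal{E})\geq(\sqrt{\lambda_{\min}}/2)^{n+1}\geq\exp(-n^2\log(cn/T))$. Your Vandermonde-type upper bound on $\det\Sigma$ is a true statement but is neither necessary (the paper uses $|\Sigma|\leq1$) nor sufficient, and the eigenvalue bound from \eqref{a1} is what actually drives the estimate.
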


\begin{Theorem} \label{thmtwodim} Let $X$ be a centered stationary Gaussian process on $\re^2$ whose spectral measure $\mu$ is such that it satisfies \eqref{a2} and  all its moments $C_{m,n}$ (or equivalently $\widetilde{L}_n$) are finite. Then there are constants $b,B \in (0,1)$ such that for every $\ep \in (0,1/4)$, $n \geq 1/\ep^2$ and {$\deltat \in (0, b\lfloor \ep n \rfloor)$} such that $B/L_{n}^{1/n} \cdot (n/T)^{1-4\ep} \geq e$, we have
	\begin{align}\label{mainest3}
	\p(\lt > 4 n \deltat)  \leq 6 \exp \left\{-\frac{\ep n^2}{2} \log\left(\frac{B}{L_{n}^{1/n}} \lb \frac{n}{\deltat} \rb^{1-4\ep} \right) \right\}.
	\end{align}
\end{Theorem}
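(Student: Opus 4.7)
First, by \eqref{a2} and an orthogonal rotation I may assume that $v_1,v_2$ are the standard coordinate axes, so that the horizontal and vertical marginals $\mu^{(1)},\mu^{(2)}$ of $\mu$ each satisfy \eqref{a1}. The mixed moments $C_{m,n}$ dominate the one-dimensional moments of each marginal, and by Remark \ref{remasymp} the one-dimensional quantity $D_n(\mu^{(i)})$ from \eqref{defns} is controlled by $L_n$. Applying the coarea formula to the projections $(x,y)\mapsto x$ and $(x,y)\mapsto y$ restricted to the nodal curve, together with $|\cos\alpha|+|\sin\alpha|\geq 1$ for the tangent angle $\alpha$, yields the integral geometric inequality
\begin{equation*}
\lt\;\leq\;\int_0^T \#\{x\in[0,T]:X(x,y)=0\}\,dy\;+\;\int_0^T \#\{y\in[0,T]:X(x,y)=0\}\,dx\;=:\;I_1+I_2 .
\end{equation*}
Consequently $\{\lt>4n\deltat\}\subseteq\{I_1>2n\deltat\}\cup\{I_2>2n\deltat\}$, and by symmetry it is enough to estimate $\p(I_1>2n\deltat)$.

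Next, for each fixed $y_0\in[0,T]$ the restricted process $X(\cdot,y_0)$ is a centered stationary Gaussian on $\re$ with spectral measure $\mu^{(1)}$, so Theorem \ref{thmonedim} is applicable to its zero count $N^h(y_0):=\#\{x\in[0,T]:X(x,y_0)=0\}$. To convert the integral event $\{I_1>2n\deltat\}$ into a pointwise one, I would discretize $y\in[0,T]$ along a grid $\{y_j=jT/K\}$ with $K\asymp n$: if $y\mapsto N^h(y)$ is nearly constant on each subinterval $[y_j,y_{j+1}]$, then pigeonhole forces $N^h(y_{j^{\ast}})\geq cn$ for some $j^{\ast}$, and a union bound yields
\begin{equation*}
\p(I_1>2n\deltat)\;\leq\;K\cdot\max_{j}\p\bigl(N^h(y_j)\geq cn\bigr)\;+\;(\text{discretization error}).
\end{equation*}

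Applying Theorem \ref{thmonedim} to each $X(\cdot,y_j)$ with the slightly degraded tolerance $2\ep$ in place of $\ep$ simultaneously absorbs the $\log K=O(\log n)$ factor coming from the union bound and the comparison between $D_n(\mu^{(1)})$ and $L_n$; this is why the exponent of $(n/T)$ in \eqref{mainest3} is $1-4\ep$ rather than the $1-2\ep$ of \eqref{mainestimate0}. Summing the contributions from $I_1$ and $I_2$ and adding the (smaller) discretization error should account for the universal constant $6$ appearing in \eqref{mainest3}.

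The main obstacle is the discretization step: the integer-valued function $y\mapsto N^h(y)$ changes value exactly at those $y$ for which $X(\cdot,y)$ has a tangential zero on $[0,T]$, i.e., a simultaneous zero of $X$ and $\partial_x X$. Quantifying that only polynomially-in-$n$ many such transitions occur on $[0,T]$ requires a two-dimensional Bulinskaya/Kac--Rice estimate for the pair $(X,\partial_x X)$, whose finiteness is controlled by the mixed moments $C_{m,n}$. This estimate has to be threaded through carefully so that the main exponential bound coming from Theorem \ref{thmonedim} is preserved, and this is where I expect the technical bulk of the proof to lie (mirroring the role of Lemma \ref{newlenlem} advertised in the introduction).
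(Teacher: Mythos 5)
Your proposal shares the paper's starting point — the integral-geometric inequality $\lt\lesssim\int_0^T N_{1,T}+\int_0^T N_{2,T}$ (Result \ref{lem11}) — but from there it diverges into a strategy with a real gap. You propose to apply Theorem \ref{thmonedim} to the one-dimensional slices $X(\cdot,y_j)$ at $K\asymp n$ grid points and then close the discretization gap via a Kac--Rice count of tangential zeros $\{X=\partial_x X=0\}$. The trouble is that the expected number of such tangential zeros in $[0,T]^2$ is $\Theta(T^2)$, independent of $n$, so to keep them from spoiling the estimate you would need a large-deviation bound for this two-dimensional simultaneous-zero count at a strength comparable to the theorem you are trying to prove. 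That estimate is not sketched and is at least as hard as the original problem, so the step "pigeonhole forces $N^h(y_{j^*})\geq cn$" plus "absorb the discretization error" is not actually available. Moreover, even granting a bound on the number of tangential zeros, the fluctuation of $N^h(y)$ between grid points is not obviously controlled uniformly in $y$.

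The paper avoids this entirely by \emph{not} applying Theorem \ref{thmonedim} slice-by-slice. Instead it reuses the deterministic ingredient inside that theorem's proof (Lemma \ref{resmain}/Corollary \ref{cor1}) together with a cheap propagation trick (Lemma \ref{cor11}): if $\|\partial_2 X\|_{L^\infty[0,n]^2}\leq M/2$ and $|X(x_0,r\delta)|>M\delta$ at some $x_0\in[T,2T]$, then $|X(x_0,y)|>M\delta/2$ for all $y$ within $\delta$ of $r\delta$, and combined with $\|\partial_1^n X\|\leq M/2$ this forces $N_{2,T}(y)<n$ deterministically on that whole strip. Tiling $[0,T]$ by $\delta$-strips with $\delta=(2T)^n/n!$ yields a union bound over $\lceil T/\delta\rceil$ small-ball events plus four sup-norm events (Lemma \ref{lem_new}); the super-exponential count $T/\delta\leq(n/T)^n$ is harmlessly absorbed by the $\exp(-\ep n^2h_n/2)$ small-ball bound because $n^2\gg n$ (this is exactly where the exponent degrades from $1-2\ep$ to $1-4\ep$). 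Bulinskaya's lemma (Result \ref{lembul}) enters only qualitatively, to justify applying Result \ref{lem11} almost surely, not as a quantitative count of tangential zeros. You correctly sensed that the derivative bound on $\partial_2 X$ and something like Lemma \ref{newlenlem} must do the technical work, but the mechanism is the deterministic propagation above, not a Kac--Rice estimate, and your proposal as written does not supply the missing step.
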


\begin{Rem} 
	If we fix $n \geq 16$ and take $\ep = 1/4$ in \eqref{mainestimate0}, we can conclude that there are constants $b_n, b'_n >0$ such that for every $\delta \in (0,b_n')$ we have
	\begin{align*}
	\p(\nn_{\delta} \geq  n) \leq b_n~ \delta^{n^2/16},
	\end{align*}
	and this indicates short-range repulsion of the zeros of $X$.
\end{Rem}
In Section \ref{seccons}, we apply Theorems \ref{thmonedim} and \ref{thmtwodim} to specific classes of spectral measures $\mu$ and get tail estimates for $\nt$ and $\lt$;  these results are summarized in the following tables.
\begin{table}[ht]
	\renewcommand{\arraystretch}{2.6}
	\begin{tabular}{ |x{2.64cm}|x{3.3cm}|x{3.2cm}|x{2.7cm}|x{2.3cm}|}
		\hline
		\bf{Growth of $C_n$}& \bf{Example of $\mu$} &  \bf{Constraints on $T$ and $n$} & \bf{ $\log \p(\nt \geq n)$}& \bf{$(\e[\nt^m])^{\frac{1}{m}} \lesssim $}\tnhl 
		$C_n \leq q^n$, for $q>0$&  Any $\mu$ with supp$(\mu) \subseteq [-q,q]$ &  $T\geq 1,~n \geq CT$, for $C\gg 1$& $\asymp - n^2 \log(\frac{n}{T})$& $T \vee \sqrt{m}$\tnhl
		$C_n \leq n^{\alpha n}$, for $\alpha \in (0,1)$ & $\mu \sim \mathcal{N}(0,1)$, with $\alpha = 1/2$&$T\geq 1,~ n \geq T^{1/\kappa}$, for $\kappa \in (0,1-\alpha)$& $\asymp -n^2 \log n$& $T^{1/\kappa} \vee \sqrt{m}$ \tnhl
		$C_n \leq n^{\alpha n}$, for $\alpha \geq 1$	&$d\mu(x) \propto e^{-|x|^{\frac{1}{\alpha}}}dx$ &$T=1, n\in\nat$& $\lesssim -n^{\frac{2}{\alpha + \kappa}}$, for any $\kappa >0$& $ m^{(\alpha + \kappa)/2}$ \tnhl
		$\log C_n \lesssim n^{\frac{\gamma +1}{\gamma}}$, for $\gamma >1/2$& $ d\mu(x) \propto e^{-(\log |x|)^{1+\gamma}} \mathbbm{1}_{|x| \geq 1}dx$& $T=1, n\in \nat$ &  $\lesssim -(\log n)^{2\ga}$ & $\exp({c}  m^{\frac{1}{2\ga -1}})$\tnhl
	\end{tabular}
	\vskip .5cm
	\caption{Consequences of Theorem \ref{thmonedim} } \label{table1}
\end{table}

\begin{table}[ht]
	\renewcommand{\arraystretch}{2.6}
	\begin{tabular}{ |x{2.65cm}|x{3.3cm}|x{3.2cm}|x{2.7cm}|x{2.5cm}|}
		\hline
		\bf{Growth of $L_n$}& \bf{Example of $\mu$ or $X$} &  \bf{Constraints on $T$ and $\ell$} & \bf{ $\log \p(\lt \geq \ell)$}& \bf{$(\e[\lt^m] )^{\frac{1}{m}} \lesssim $}\tnhl 
		$L_n \leq q^n$, for $q>0$&  Random plane wave $(d\mu(\theta) \sim$ $d\theta/2\pi$  on $\mathbb{S}^1)$ &  $T\geq 1,~\ell \geq CT^2$, for $C\gg 1$& $\lesssim -(\frac{\ell^2}{T^2} \log \frac{\ell}{T^2})$& $T(T \vee \sqrt{m})$\tnhl
		$L_n \leq n^{\alpha n}$, for $\alpha \in (0,1)$ & $\mu \sim \mathcal{N}(0,I)$, with $\alpha = 1/2$&$T\gg 1,~ \ell \geq T^{\frac{\kappa +1}{\kappa}}$, for $\kappa \in (0,1-\alpha)$& $\lesssim - (\frac{\ell^2}{T^2} \log \ell )$ & $T(T^\frac{1}{\kappa} \vee \sqrt{m})$ \tnhl
		$L_n \leq n^{\alpha n}$, for $\alpha \geq 1$	&$d\mu_{r}(x) \propto e^{-x^{\frac{1}{\alpha}}}~\mathbbm{1}_{x>0}~dx$ &$T=1, \ell \gg 1$& $\lesssim - \ell^{\frac{2}{\alpha + \kappa}}$, for any $\kappa >0$& $ m^{(\alpha + \kappa)/2}$ \tnhl
		$\log L_n \lesssim n^{\frac{\ga + 1}{\gamma}}$, for $\gamma >1/2$& $ d\mu_{r}(x) \propto e^{-(\log x)^{1+\gamma}}~ \mathbbm{1}_{x \geq 1}~dx$& $T=1, \ell \gg 1$ &  $\lesssim -(\log \ell )^{2\ga}$ & $\exp({c} m^{\frac{1}{2\ga -1}})$\tnhl
	\end{tabular}
	\vskip .5cm
	\caption{Consequences of Theorem \ref{thmtwodim}} \label{table2}
\end{table}

\subsection{Prior work $\&$ comparison with our results} Before we comment about how our results compare with what is already known, let us  briefly review the results known about zero count and nodal length of stationary Gaussian processes.  A more detailed and comprehensive account of results known so far can be found in  \cite{July2020,bdfz,hugo}.

As before, let $X$ be a centered stationary Gaussian processes on $\re$/$\re^2$ whose spectral measure $\mu$ is a symmetric  Borel probability measure. We denote its  covariance kernel by $k = \widehat{\mu}$. The exact value of the expectations of $\nt$ and $\lt$ are given by the Kac-Rice formulas. Under the assumption that the tails of $\mu$ are light enough and some integrablility assumptions on $k$ and $k''$, it was shown in \cite{JCuzik1} that Var$\nt \asymp T$ and a central limit theorem was also established for $\nt$. Finiteness of moments of $\nt$ and bounds for these were established in \cite{  A2019,AW,NW} under the assumption that the moments of $\mu$ are finite and more recently the asymptotics of the central moments of $\nt$ were obtained in \cite{July2020} assuming certain decay of $k$. Under the assumption that $\mu$ has very light tails and $k$ is integrable, it was shown in \cite{bdfz} that $\nt$ concentrates exponentially around its mean. Central limit theorems and variance asymptotics for $\lt$ in specific examples of stationary Gaussian processes were obtained in \cite{Kratz, npr}.
Under the assumption that moments of $\mu$ are finite, finiteness of moments of $\lt$ were established in \cite{azaisetal}.


These results can be broadly classified into two categories based on the assumptions made on their spectral measure or covariance.
\begin{itemize}[align=left,leftmargin=*,widest={7}]
	\item Decay of covariance $k$ (and light tails of $\mu$): The philosophy behind such an assumption is that decay of $k$ implies \textit{quasi-independence} of events in well separated regions and hence for a large $T$, $\nt$ is approximately a sum of  identically distributed random variables which are  $M$-dependent. Although it is quite easy to state this idea, building on it and making it work is far from trivial. Hence the concentration results in \cite{bdfz,JCuzik1} can be viewed as stemming from some underlying independence. 
	\item Finiteness of moments of $\mu$: In the other results like \cite{  A2019,AW,NW} which established finiteness of moments of $\nt$, the only assumption made on $\mu$ is finiteness of its moments. The significance of this, as discussed earlier, is that a control on the moments imply a control on the oscillations and hence on the zero count also. 
	\end{itemize}

The nature of the results from these two categories also differ: in the former, the assumptions made are strong and so are the conclusions (concentration) and in the latter, the assumptions are much weaker and the conclusions (non-explicit moment bounds) are also weak.
 In terms of the assumptions made on the spectral measure/covariance, our result falls in the second category, but surprisingly we get quite strong conclusions. To illustrate this claim, we present some consequences of  Theorem \ref{thmonedim}; the following are  two instances where we get the exact asymptotics for the deviation probabilities $\p(\nt \geq n)$:
	\begin{itemize}[align=left,leftmargin=*,widest={7}]
	\item If $\mu$ is compactly supported, there is $C > 1$  such that if $T>1$ and $CT \leq n$, then
	\begin{align*}
	-\log \p(\nt \geq n) \asymp n^2 \log(n/T).
	\end{align*}
	\item If $\mu$ is such that $C_n \leq n^{\alpha n}$ for some $\alpha \in (0,1)$, then for every $\kappa \in (0,1-\alpha)$ and $n,T$ such that $1 \leq T \leq n^{\kappa}$, we have
	\begin{align*}
	-\log \p(\nt \geq n) \asymp n^2 \log n.
	\end{align*}
\end{itemize}
It is also interesting to compare the exponential concentration result (\cite{bdfz}, Theorem 1.1)  with Theorem \ref{thmonedim}; both the results give estimates for $\p(|\nt - \e[\nt]| \geq F(T))$. The former gives estimates when the fluctuation $F(T)$ is of order $T$, while our result gives estimates when $F(T)/T \gg 1$ and  $F(T)$ is large (how large depends on the growth of moments $C_n$) enough. Hence only in certain situations can we actually compare these two results and one such instance is when $\mu$ is compactly supported and has a density which belongs to $ W^{1,2}$. There exists $C > 1$ (depending on $\mu$) such that for $\eta >0$ and $F(T) = \eta T$, the estimates for the deviation probability from \cite{bdfz} and Theorem \ref{thmonedim} are $\exp(-c_{\eta}T)$, $\forall \eta >0$ and $\exp(-c\eta^2 T^2)$, $\forall \eta \geq C$  respectively. So this naturally raises a question about the true deviation probabilities: either the true deviations for $\eta \in (0,C)$ are much smaller than $\exp(-c_{\eta} T)$ or there is some $c_0 \in (0,C)$ such that  the behaviour of the deviation probabilities are different  for $\eta \in (0,c_0)$ and $\eta \in (c_0,C)$ which hints at a possible JLM law.

\subsection{Plan of the paper}In Section \ref{secidea}, we present the main ideas in the proofs of Theorems \ref{thmonedim} and \ref{thmtwodim} and also give a sketch of their proofs. The relevant details and calculations required to complete the proofs are presented in Sections \ref{seconedim} and \ref{sectwodim}. In Section \ref{secprelim}, we recall some known results and present other preliminary results required in proving our main theorems.  In Section \ref{seccons}, we deduce the tail bounds for $\nt$ and $\lt$ presented in Tables \ref{table1} and \ref{table2} from Theorems \ref{thmonedim} and \ref{thmtwodim} respectively.  

\section{Idea of the proof}\label{secidea}
In this section, we present the key ideas in the proofs of  Theorems \ref{thmonedim} and \ref{thmtwodim}; we also give a brief sketch of their proofs here. The calculations and detailed arguments will be presented in Sections \ref{seconedim} and \ref{sectwodim}. 
	\subsection{ Proof idea  {of} Theorem \ref{thmonedim}}\label{secideaone}
We now present a simple yet very useful idea regarding the zero count of a smooth function and this serves as the starting point for proving the upper bound in Theorem \ref{thmonedim}. This idea has  previously been employed in \cite{A2019,AW,NW} to study moments of the  zero count of random functions. {The following lemma is inspired by these earlier {versions}  and is a slight modification of those}.


\begin{Lemma} \label{resmain} Let $T, M >0$ and $n \in \nat$. Let $f:[0, 2T] \ra \re$ be a  smooth function which  has at least $n$ distinct roots in $[0,T]$ and  $\| f^{(n)}\|_{L^{\infty}[0,2T]}~ \leq M$. Then for every $0 \leq k \leq n$, we have
	\begin{align*}
	\|f^{(n-k)}\|_{L^{\infty}[T,2T]} \leq M \frac{(2T)^k}{k!}.
	\end{align*}
\end{Lemma}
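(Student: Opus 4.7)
The plan is to combine two classical ingredients: iterated Rolle's theorem (which converts the hypothesis on roots of $f$ into roots for its derivatives), and the Lagrange interpolation remainder formula (which bounds a smooth function with prescribed zeros in terms of its top derivative).

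First, I would apply Rolle's theorem $n-k$ times. Since $f$ has at least $n$ distinct zeros in $[0,T]$, the derivative $f'$ has at least $n-1$ distinct zeros in $[0,T]$, and by induction $f^{(n-k)}$ has at least $k$ distinct zeros $y_1,\dots,y_k$ in $[0,T]$. This step uses nothing beyond smoothness of $f$.

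Next, fix $k \in \{0,1,\dots,n\}$ and consider the smooth function $g := f^{(n-k)}$ on $[0,2T]$. Its $k$-th derivative is $g^{(k)} = f^{(n)}$, which is bounded by $M$ in absolute value on $[0,2T]$. Because $g$ vanishes at the $k$ distinct points $y_1,\dots,y_k$, the interpolating polynomial of degree $<k$ matching $g$ at those nodes is identically zero, so the standard Lagrange remainder formula gives, for every $x\in[0,2T]$, some $\xi = \xi(x)\in[0,2T]$ with
\[
f^{(n-k)}(x)\;=\;g(x)\;=\;\frac{g^{(k)}(\xi)}{k!}\prod_{i=1}^{k}(x-y_i)\;=\;\frac{f^{(n)}(\xi)}{k!}\prod_{i=1}^{k}(x-y_i).
\]
If a clean citation is preferred, one can derive this identity in situ by applying Rolle $k$ times to $h(t):=g(t)-c\prod_i(t-y_i)$ with $c$ chosen so that $h(x)=0$.

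Finally, restrict $x$ to $[T,2T]$. Since every $y_i$ lies in $[0,T]$, each factor $|x-y_i|$ is at most $2T$, and together with $|f^{(n)}(\xi)|\le M$ this yields
\[
\|f^{(n-k)}\|_{L^{\infty}[T,2T]}\;\le\;\frac{M\,(2T)^{k}}{k!},
\]
which is the asserted bound. There is no real obstacle here; the only subtlety worth flagging is that the Rolle step needs the $n$ roots in $[0,T]$ to be distinct (as assumed), and that the Lagrange remainder formula requires $f$ to be $C^{n}$ on the enclosing interval $[0,2T]$, which is guaranteed by smoothness of $f$.
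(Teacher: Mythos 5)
Your proof is correct. It takes a genuinely different, though closely related, route from the paper's. You apply Rolle iteratively to produce $k$ distinct zeros of $g := f^{(n-k)}$ in $[0,T]$ and then invoke the Lagrange interpolation remainder theorem in one shot: since the degree-$<k$ interpolant of $g$ at those nodes vanishes identically, $g(x)=\tfrac{g^{(k)}(\xi)}{k!}\prod_i(x-y_i)$, and the product is at most $(2T)^k$ on $[T,2T]$. The paper instead extracts from Rolle a \emph{nested chain} of single zeros $0\leq\alpha_{n-1}\leq\cdots\leq\alpha_0\leq T$ with $f^{(j)}(\alpha_j)=0$, and bootstraps by integrating from $\alpha_{n-1}$ upward: $|f^{(n-1)}(t)|\leq Mt$, then $|f^{(n-2)}(t)|\leq Mt^2/2$, and so on inductively, finally restricting to $t\in[T,2T]$. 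The two arguments are morally the same (the proof of the Lagrange remainder is itself an iterated-Rolle argument), but yours packages the induction behind a classical theorem and gains uniformity in $k$ for free, while the paper's version is fully self-contained and makes the ordering of the $\alpha_j$'s do the work of the node-to-point distance bound. One small remark: you are careful to note that the $k$ zeros must be distinct for the interpolation formula, and indeed Rolle does yield distinct zeros at each level since the $n$ roots of $f$ are assumed distinct; the paper's chain argument doesn't even need distinctness of the intermediate $\alpha_j$'s, only the ordering, which is a minor robustness advantage in principle though irrelevant under the stated hypotheses.
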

\begin{proof}
	Let  $g:[0,T] \ra \re$ be a smooth function and let $x_1 < x_2$ be two zeros of $g$, then there exists a zero of $g'$ in $[x_1,x_2]$. This along with the fact that $f$ has at least $n$ distinct roots in $[0,T]$ implies that there are points $0\leq \alpha_{n-1} \leq \alpha_{n-2}\leq \cdots \leq \alpha_1 \leq \alpha_0 \leq T$ such that for every $0 \leq k \leq n-1$, we have $f^{(k)} (\alpha_k) = 0$.  For $t \in [\alpha_{n-1}, 2T]$, we have
	\begin{equation} \label{eqnew1}
	\begin{gathered}
	|f^{(n-1)}(t)|  = |\int_{\alpha_{n-1}}^{t} f^{(n)}(x)~ dx|  \leq M(t-\alpha_{n-1}) \leq Mt.
	\end{gathered}
	\end{equation}
	Using the estimate for $|f^{(n-1)}|$ in \eqref{eqnew1}, we have for every $t \in [\alpha_{n-2}, 2T]$
	\begin{align*}
	|	f^{(n-2)}(t)| = |\int_{\alpha_{n-2}}^{t} f^{(n-1)}(x)~ dx | \leq M \lb \frac{t^2 - \alpha_{n-2}^{2}}{2} \rb  \leq M \frac{t^2}{2}. 
	\end{align*}
	Thus inductively we can establish that for every $k \leq n-1$ and every $t \in [\alpha_{n-k}, 2T]$
	\begin{align*}
	|f^{(n-k)}(t)| \leq M \frac{t^k}{k!} \leq M \frac{(2T)^k}{k!},
	\end{align*}
	and this establishes our claim. 
\end{proof}
\noindent We get the following result as  an immediate consequence of Lemma \ref{resmain}.
\begin{Lemma} \label{lem2_onedim} Let $F: \re \ra \re$ be a smooth random function, then for every $T,M>0$ and $n \in \nat$ we have
		\begin{equation}
	\p(\nt \geq n) \leq \p \lb \lVert F \rVert_{L^{\infty}[\deltat, 2\deltat]} \leq M \frac{(2\deltat)^n}{n!} \rb + \p(\lVert F^{(n)} \rVert_{L^{\infty}[0,2\deltat]} > M). \label{main_onedim}
	\end{equation}
	\end{Lemma}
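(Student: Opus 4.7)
The plan is to split the event $\{\nt \geq n\}$ according to whether or not the $n$-th derivative of $F$ is uniformly controlled on $[0, 2\deltat]$, and then apply Lemma~\ref{resmain} pointwise on the favorable event. Concretely, I would write
\begin{align*}
\{\nt(F) \geq n\} \;\subseteq\; \bigl(\{\nt(F) \geq n\} \cap \{\|F^{(n)}\|_{L^\infty[0,2\deltat]} \leq M\}\bigr) \;\cup\; \{\|F^{(n)}\|_{L^\infty[0,2\deltat]} > M\},
\end{align*}
and deal with the two pieces separately. The second piece contributes the second term on the right-hand side of \eqref{main_onedim} directly after applying $\p$.

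For the first piece I would argue $\omega$-wise: on the event considered, the realization $x \mapsto F(x,\omega)$ is a smooth function on $[0, 2\deltat]$ which has at least $n$ distinct zeros in $[0, \deltat]$ and satisfies $\|F^{(n)}(\cdot,\omega)\|_{L^\infty[0,2\deltat]} \leq M$. Hence Lemma~\ref{resmain} applies with this realization in place of $f$; taking $k = n$ (so that $f^{(n-k)} = f$) gives
\begin{align*}
\|F(\cdot,\omega)\|_{L^\infty[\deltat, 2\deltat]} \;\leq\; M\, \frac{(2\deltat)^n}{n!}.
\end{align*}
Consequently the first event is contained in $\{\|F\|_{L^\infty[\deltat, 2\deltat]} \leq M (2\deltat)^n/n!\}$.

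Combining the two inclusions and using sub-additivity of $\p$ yields \eqref{main_onedim}. There is no real obstacle here: once Lemma~\ref{resmain} is in hand, the only thing to verify is that the set-theoretic inclusion above is measurable, which is automatic since $F$ is a smooth random function (so both $\|F\|_{L^\infty[\deltat,2\deltat]}$ and $\|F^{(n)}\|_{L^\infty[0,2\deltat]}$ are measurable, the former because the sup over a compact interval can be taken over a countable dense set by continuity, and similarly for the latter).
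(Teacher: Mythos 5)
Your proof is correct and is exactly the intended argument: the paper states Lemma \ref{lem2_onedim} as ``an immediate consequence of Lemma \ref{resmain}'' without writing out a proof, and the decomposition you give (split on whether $\|F^{(n)}\|_{L^\infty[0,2T]}\leq M$, apply Lemma \ref{resmain} with $k=n$ $\omega$-wise on the favorable piece, then take a union bound) is precisely that consequence.
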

\subsubsection*{Upper bound in Theorem \ref{thmonedim}}
We use Lemma \ref{lem2_onedim} to prove the upper bound in Theorem \ref{thmonedim} and hence we only need to estimate the two terms on the r.h.s. of \eqref{main_onedim}. The second term is estimated using  well known tail bounds for the supremum of Gaussian processes; we recall these standard results in Section \ref{secsupGP}. Since we are working with stationary Gaussian processes, calculating the metric entropy and the corresponding Dudley integral are quite straightforward.

 The first term corresponds to a small ball event and the discussion in Section \ref{secsmallball} is devoted to obtaining probability estimates for such small ball events. We briefly explain how this is done: say $\eta >0$ and  we want to estimate $\p(\|X\|_{L^{\infty}[0,T]} \leq \eta)$. For $m \in \nat$ and $ 0 \leq k \leq m$, we define $t_{k} := kT/m$ which are $(m+1)$ equispaced points in $[0,T]$. Consider the Gaussian vector $V_m$ defined by 
$V_m := (X_{t_{0}}, \ldots, X_{t_{m}}).$
Let $\Sigma_m$ and $\phi_m$ be the covariance matrix and the density of $V_m$ respectively, then 
\begin{align*}
\phi_m(x) = \frac{1}{(\sqrt{2\pi})^{m+1}~ |\Sigma_m|^{1/2}} \exp \lb -\frac{\langle \Sigma_{m}^{-1}x,x \rangle}{2}\rb,\\
\text{hence }\|\phi_m\|_{L^{\infty}(\re^{m+1})} \leq \frac{1}{(\sqrt{2\pi})^{m+1}~ |\Sigma_{m}|^{1/2}} \leq \frac{1}{(\sqrt{2\pi})^{m+1}~ \lambda_{m}^{m+1/2}},
\end{align*}
where $\lambda_m$ is the smallest eigenvalue of $\Sigma_m$ and hence the following is an estimate for the  small ball probability
\begin{align}\label{ssbbpp}
\p(\|X\|_{L^{\infty}[0,T]} \leq \eta) \leq \int_{[-\eta,\eta]^{m+1} } \phi_{m}(x) dx \leq \lb \frac{2 \eta}{\sqrt{2\pi}~ \lambda_{m}^{1/2}}\rb^{m+1}.
\end{align}
Result \ref{l2} gives a lower bound for $\lambda_m$ and hence an upper bound for the small ball probability in \eqref{ssbbpp}. We then use \eqref{ssbbpp} with $\eta = M(2T)^n/n!$ and an optimal choice of $m$ to get an esimate for the first term in \eqref{main_onedim}. 

For given values of $n$ and $T$, we need to make an optimal choice of $M$ (it should be small enough so that  the first event is indeed a small ball event and large enough so that the second event is unlikely) so that the r.h.s. of \eqref{main_onedim} is as small as possible.
\subsubsection*{Lower bound in Theorem \ref{thmonedim}} We use the same notations as above and take $m=n$. {If the sign of a continuous function defined on $[0,T]$ alternates at the points $t_k$, then there is necessarily a 0-crossing of the function between every two successive points $t_k$ and hence the zero count in $[0,T]$ is at least $n$}. Thus we have
\begin{align}\label{eq 89}
\p(X_{t_0}<0, X_{t_1}>0, \ldots, (-1)^{n+1} X_{t_{n}} >0 ) \leq \p(\nt \geq n),
\end{align}
and hence to get a lower bound for the above probability, we need a lower bound on the density $\phi_n$ and we get this below. $\Sigma_n$ being a Gram-matrix with all diagonal entries equal to 1, we have $|\Sigma_n| \leq 1$ (\cite{MK}, Lemma 3) and hence 
\begin{align}\label{eq 90}
\phi_n (x) \geq \frac{1}{(\sqrt{2\pi})^{n+1}} \exp \lb -\frac{\langle \Sigma_{n}^{-1}x,x \rangle}{2}\rb \geq \frac{1}{(\sqrt{2\pi})^{n+1}} \exp \lb -\frac{\|x\|^2}{2 \lambda_n}\rb.
\end{align}
As before, we use the lower bound for $\lambda_n$ from Result \ref{l2} to get a lower bound on the density in \eqref{eq 90} and then use this to get a lower bound for the l.h.s. of \eqref{eq 89}. 


\begin{Rem} Result \ref{l2} gives a lower bound for $\lambda_n$ and the only assumption on $\mu$ required to establish this result is \eqref{a1}. Hence it is the same assumption, namely \eqref{a1},  which gives both  the following bounds:	
	\begin{itemize}[ align=left,leftmargin=*,widest={9}]
		\item an upper bound for the small ball estimate in \eqref{ssbbpp},
		\item a lower bound for the probability of the event $\{X_{t_0}<0, X_{t_1}>0,\ldots\}$ in  \eqref{eq 89}.
		\end{itemize}
	This is not very surprising and we can perceive this as: \textit{the propensity of $X$ to oscillate makes it difficult  for it to be confined to a small ball}. 
	\end{Rem}


	\subsection{Proof idea of Theorem \ref{thmtwodim}} \label{secideatwo} We now present some deterministic results about nodal length of a smooth function which will be useful in proving Theorem \ref{thmtwodim}. 
Let us first introduce some notations. For $f: \re^2 \ra \re$ a smooth function, $T >0$ and $t \in [0,T]$, we define $N_{1,T}$ and $N_{2,T}$ as follows
\begin{align*}
N_{1,T}(t) &:= \#\{y \in [0,T]: f(t,y)=0\}, \\
N_{2,T}(t) &:= \#\{x \in [0,T]: f(x,t)=0\}.
\end{align*}
\noindent The following result in an easy consequence of Lemma \ref{resmain}. 
\begin{Cor} \label{cor1} Let $T, M >0$ and $n \in \nat$  be such that $2T <n$. Suppose $f:[0,n] \ra \re$ is a smooth function such that $\|f^{(n)}\|_{L^{\infty}[0,n]} \leq M$ and $\| f\|_{L^{\infty}[T,2T]} > M(2T)^n/n!$, then $f$ has no more than $(n-1)$ zeros in the interval $[0, T]$. 
\end{Cor}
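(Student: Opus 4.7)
The plan is to argue by contraposition: assume $f$ has at least $n$ distinct zeros in $[0,T]$ and deduce $\|f\|_{L^\infty[T,2T]} \le M(2T)^n/n!$, contradicting the hypothesis. This is essentially a direct specialization of Lemma \ref{resmain}, so the work is only in verifying that its hypotheses are met on the correct subinterval.

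First I would observe that the condition $2T < n$ is used only to guarantee $[0,2T] \subseteq [0,n]$, so that the assumption $\|f^{(n)}\|_{L^{\infty}[0,n]} \le M$ restricts to $\|f^{(n)}\|_{L^{\infty}[0,2T]} \le M$. Then $f|_{[0,2T]}$ satisfies all the hypotheses of Lemma \ref{resmain}: it is smooth, its $n$-th derivative is bounded by $M$ on $[0,2T]$, and by assumption it has at least $n$ distinct roots in $[0,T]$.

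Next I would apply Lemma \ref{resmain} with the choice $k = n$, which is permitted since the lemma allows $0 \le k \le n$. This yields
\begin{align*}
\|f\|_{L^{\infty}[T,2T]} = \|f^{(n-n)}\|_{L^{\infty}[T,2T]} \le M \cdot \frac{(2T)^n}{n!},
\end{align*}
directly contradicting the assumed lower bound $\|f\|_{L^{\infty}[T,2T]} > M(2T)^n/n!$. Hence the assumption that $f$ has at least $n$ distinct zeros in $[0,T]$ is false, and $f$ has at most $n-1$ zeros there.

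There is no real obstacle here; the only thing to be careful about is bookkeeping the indices (that $k=n$ gives the zeroth derivative, i.e.\ $f$ itself) and checking that the derivative bound transfers from $[0,n]$ to $[0,2T]$, which is immediate from $2T < n$. The corollary is essentially the contrapositive form of Lemma \ref{resmain} specialized to $k=n$, packaged as a zero-counting statement.
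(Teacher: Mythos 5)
Your proof is correct and is exactly the argument the paper has in mind: the corollary is the contrapositive of Lemma \ref{resmain} with $k=n$, with the condition $2T<n$ only serving to transfer the derivative bound from $[0,n]$ to $[0,2T]$. Nothing further is needed.
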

The following integral geometric result gives a bound for the nodal length of a smooth function and it  appears in the proof of Lemma 5.11 from \cite{DF}. 

\begin{Result} \label{lem11} Let $f:\re^2 \ra \re$ be a smooth function and let $T>0$, then we have the following bound for the nodal length of $f$
	\begin{align*}
	\mbox{length}\{z \in [0,T]^2 : f(z)=0~& \mbox{and}~\nabla f(z)\ne 0\} \nonumber\\
	& \leq \sqrt{2}\left(\int_{0}^{T} N_{1,T}(x) dx + \int_{0}^{T} N_{2,T}(y) dy \right). 
	\end{align*} 
\end{Result}

\noindent The following lemma is an immediate consequence of Corollary \ref{cor1}  and is inspired by Lemma 5.11 in \cite{DF}. 
\begin{Lemma} \label{cor11} Let $T, M>0$, $n \in \mathbb{N}$ be such that $2T \leq n$. Let $f:\re^2 \ra \re$ be a smooth function satisfying the following
	\begin{align*}
	\max\left\{\left\| \partial_2 f \right\|_{L^{\infty}[0,n]^2}, \| \partial_{1}^{n} f\|_{L^{\infty}[0,n]^2}\right\rbrace \leq M/2 \text{ and }\| f(\cdot,0) \|_{L^{\infty}[T,2T]} > M(2T)^n/n!.
	\end{align*}
	Then for every $t \in [0,(2T)^n/n!]$, we have $N_{2,T}(t) < n$.
\end{Lemma}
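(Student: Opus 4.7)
The plan is to apply Corollary \ref{cor1} to each horizontal slice $g_t(x) := f(x,t)$, for $t$ ranging over $[0, (2T)^n/n!]$. Since $N_{2,T}(t)$ by definition counts the zeros of $g_t$ in $[0,T]$, establishing the two hypotheses of Corollary \ref{cor1} for $g_t$ (with some effective constant $M'>0$) will immediately give $N_{2,T}(t) \leq n-1 < n$.

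Two verifications are required. The derivative bound is immediate: $g_t^{(n)}(x) = \partial_1^n f(x,t)$, hence $\|g_t^{(n)}\|_{L^\infty[0,n]} \leq M/2$; the geometric hypothesis $2T < n$ in Corollary \ref{cor1} follows from the assumption $2T\leq n$ (the boundary case $2T=n$ is easily handled by, if needed, replacing $n$ with $n+1$ or else noting that Lemma \ref{resmain} itself is stated for $2T$ within $[0,2T]$). The genuine content is transferring the amplitude bound $\|f(\cdot, 0)\|_{L^\infty[T,2T]} > M(2T)^n/n!$ from the slice $t=0$ to a general slice. Since $f$ is smooth and $[T,2T]$ compact, pick $x_0 \in [T,2T]$ with $|f(x_0, 0)| > M(2T)^n/n!$. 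Using the fundamental theorem of calculus together with $\|\partial_2 f\|_{L^\infty[0,n]^2} \leq M/2$,
\[
|f(x_0, t) - f(x_0, 0)| \;\leq\; \int_0^t |\partial_2 f(x_0, s)|\, ds \;\leq\; \frac{M}{2}\, t \;\leq\; \frac{M}{2}\cdot \frac{(2T)^n}{n!}.
\]
A reverse triangle inequality then yields $|g_t(x_0)| > \tfrac{M}{2}(2T)^n/n!$, so $\|g_t\|_{L^\infty[T,2T]} > M'\cdot (2T)^n/n!$ with $M' := M/2$. Now both hypotheses of Corollary \ref{cor1} are in force for $g_t$ with constant $M'$, and so $g_t$ has at most $n-1$ zeros in $[0,T]$.

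There is no serious obstacle, as the lemma is explicitly labeled an immediate consequence of Corollary \ref{cor1}. The only point worth highlighting is the bookkeeping choice of bounding both $\partial_2 f$ and $\partial_1^n f$ by $M/2$ rather than $M$: this is precisely the margin needed so that the Lipschitz drift in the vertical direction consumes at most half of the lower amplitude $M(2T)^n/n!$, leaving enough headroom $(M/2)(2T)^n/n!$ to reapply Corollary \ref{cor1} with the effective constant $M'=M/2$.
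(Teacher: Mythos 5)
Your proposal is correct and follows essentially the same route as the paper: fix a point $x_0 \in [T,2T]$ witnessing the lower bound at $t=0$, propagate the lower bound to height $t$ via the fundamental theorem of calculus and the bound on $\partial_2 f$, then apply Corollary \ref{cor1} to the slice $f(\cdot,t)$ with effective constant $M/2$. Your observation about the $2T \leq n$ versus $2T < n$ boundary case and the role of the factor $1/2$ are both accurate; this matches the paper's argument in substance and structure.
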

\begin{proof} Let $x_0 \in [T, 2T]$ be such that $|f(x_0,0)| > M(2T)^n/n!$, then for every $t \in [0,(2T)^n/n!]$ we have
	\begin{equation*}
	\begin{gathered}
	f(x_0, t) - f(x_0, 0) =	\int_{0}^{t} \partial_2 f(x_0,s)~ds,\\
\text{hence }	|f(x_0, t) - f(x_0, 0)| < \frac{M}{2} \cdot \frac{(2T)^n}{n!},\\
\text{hence }	|f(x_0, t)| > \frac{M (2T)^n}{2n!}.
	\end{gathered}
	\end{equation*}	
	\vskip .2cm
	\noindent Thus we have $\| f(\cdot,t) \|_{L^{\infty}[T, 2T]} \geq M (2T)^n/2n!$ and $\| \partial_{1}^{n} f (\cdot, t)\|_{L^{\infty}[0,n]^2} \leq M/2$. Corollary \ref{cor1} now gives the desired result. 
\end{proof}
\noindent The following lemma which is a direct and easy consequence of Lemma \ref{cor11} is the main ingredient  in the proof of Theorem \ref{thmtwodim}. 

\begin{Lemma}\label{newlenlem} Let $T, M>0$, $n \in \mathbb{N}$ be such that $2T \leq n$ and define $\delta := (2T)^n/n!$. Suppose that $f:\re^2 \ra \re$ is a smooth function, then we have the following. 
	\begin{itemize}[align=left,leftmargin=*,widest={10}]
		\item If for every $r \in \{0,1,2,\ldots,\lfloor \delta^{-1}T \rfloor\}$, the following bounds hold for $f$
		\begin{align} \label{con1}
		\max\left\{\left\| \partial_2 f \right\|_{L^{\infty}[0,n]^2}, \| \partial_{1}^{n} f\|_{L^{\infty}[0,n]^2}\right\rbrace \leq M/2 \text{ and }\| f(\cdot,r\delta) \|_{L^{\infty}[T,2T]} > M\delta,
		\end{align}
		then for every $t \in [0,T]$ we have $N_{2,T}(t) < n$.
		\vskip .2cm
		\item If for every $r \in \{0,1,2,\ldots,\lfloor \delta^{-1}T \rfloor\}$, the following bounds hold for $f$
		\begin{align}\label{con2}
		\max\left\{\left\| \partial_1 f \right\|_{L^{\infty}[0,n]^2}, \| \partial_{2}^{n} f\|_{L^{\infty}[0,n]^2}\right\rbrace \leq M/2 \text{ and }\| f(r\delta,\cdot) \|_{L^{\infty}[T,2T]} > M\delta,
		\end{align}
		then for every $t \in [0,T]$ we have $N_{1,T}(t)< n$. 
	\end{itemize}
	Thus if both \eqref{con1} and \eqref{con2} hold for $f$,  we can conclude from Result \ref{lem11} that 
	\begin{align*}
	\mbox{length}\{z \in [0,T]^2 : f(z)=0~ \mbox{and}~\nabla f(z)\ne 0\} 
	\leq 4nT. 
	\end{align*} 
\end{Lemma}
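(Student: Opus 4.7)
The plan is to reduce each of the two cases to Lemma \ref{cor11} by a simple sliding argument in the transverse coordinate, and then to combine the resulting uniform slice-count bounds via the integral geometric inequality in Result \ref{lem11}.

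For the first assertion, given any $t \in [0, T]$ I pick $r \in \{0, 1, \ldots, \lfloor \delta^{-1} T \rfloor\}$ with $r\delta \le t \le (r+1)\delta$, so that $s := t - r\delta \in [0, \delta]$; such an $r$ exists precisely because the index range in the hypothesis is chosen so that the intervals $[r\delta, (r+1)\delta]$ cover $[0, T]$. I then apply Lemma \ref{cor11} to the shifted function $\tilde f(x, u) := f(x, r\delta + u)$. The derivative bounds $\|\partial_2 \tilde f\|_{L^\infty[0,n]^2} \le M/2$ and $\|\partial_1^n \tilde f\|_{L^\infty[0,n]^2} \le M/2$ transfer from \eqref{con1} once we note that $2T \le n$ leaves enough room for the small vertical translation to remain in the region where \eqref{con1} controls $f$; and the remaining hypothesis $\|\tilde f(\cdot, 0)\|_{L^\infty[T, 2T]} > M(2T)^n/n! = M\delta$ is exactly the slice bound in \eqref{con1} at the sampled height $r\delta$. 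Lemma \ref{cor11} then gives $\#\{x \in [0, T]: \tilde f(x, s) = 0\} < n$, which is precisely $N_{2, T}(t) < n$ for the original $f$. The second assertion is obtained by repeating the argument with the two coordinates swapped.

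With both slice-count bounds in hand, Result \ref{lem11} yields
\begin{align*}
\mbox{length}\{z \in [0, T]^2 : f(z) = 0,~ \nabla f(z) \neq 0\}
&\le \sqrt{2}\left(\int_0^T N_{1, T}(x)\, dx + \int_0^T N_{2, T}(y)\, dy\right)\\
&< 2\sqrt{2}\, nT \le 4 nT,
\end{align*}
which is the bound claimed. The whole argument is essentially mechanical, as all of the analytic content is already packaged into Lemma \ref{cor11} and Result \ref{lem11}. The only delicate point I anticipate is checking that the translated function $\tilde f$ really does inherit the required derivative bounds on the relevant domain; this works because the translation is by at most $\delta$ and the hypothesis $2T \le n$ provides sufficient slack so that none of the translates escape the region where the hypothesis controls $f$.
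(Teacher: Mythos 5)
The overall strategy — reduce to Lemma \ref{cor11} by shifting the base height from $0$ to $r\delta$, then combine the slice bounds via Result \ref{lem11} — is the intended argument, and your final arithmetic ($2\sqrt{2}\, nT \le 4nT$) is fine. But there is a genuine flaw in the way you justify the reduction.

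You write that the derivative bounds $\|\partial_2 \tilde f\|_{L^\infty[0,n]^2} \le M/2$ and $\|\partial_1^n \tilde f\|_{L^\infty[0,n]^2} \le M/2$ ``transfer from \eqref{con1}'' because ``the translation is by at most $\delta$ and the hypothesis $2T \le n$ provides sufficient slack.'' Both claims are incorrect. First, the translation is by $r\delta$ with $r$ ranging up to $\lfloor \delta^{-1}T\rfloor$, so it can be as large as roughly $T$, not at most $\delta$. Second, and more importantly, even a translation by any positive amount $r\delta$ moves the domain $[0,n]^2$ of $\tilde f$ to $[0,n]\times[r\delta, r\delta+n]$ for $f$, whose top edge lies at height $r\delta + n > n$ and hence escapes the region $[0,n]^2$ where \eqref{con1} controls $\partial_2 f$ and $\partial_1^n f$. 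The condition $2T\le n$ gives slack at the bottom of the domain, not at the top, so it cannot rescue this. As stated, the hypothesis of Lemma \ref{cor11} for $\tilde f$ simply does not follow from \eqref{con1}.

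The argument \emph{can} be made to work, but you must either (a) observe that the proof of Lemma \ref{cor11} only ever uses $\partial_2 f$ on the strip $[T,2T]\times[r\delta, t]\subset [0,n]\times[0,T]$ and $\partial_1^n f(\cdot,t)$ on the single line $[0,n]\times\{t\}$ with $t\le T$, both of which remain inside $[0,n]^2$ since $2T\le n$; or (b) bypass Lemma \ref{cor11} altogether and rerun its two-line proof directly with the base line at $y=r\delta$: pick $x_0\in[T,2T]$ with $|f(x_0,r\delta)|>M\delta$, integrate $\partial_2 f$ from $r\delta$ to $t\in[r\delta,(r+1)\delta]\cap[0,T]$ to get $|f(x_0,t)|>M\delta/2$, and apply Corollary \ref{cor1} to $f(\cdot,t)$. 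Either route closes the gap; as written, your proof does not.
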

\begin{figure}[ht]
	\def\svgwidth{.5\linewidth}
	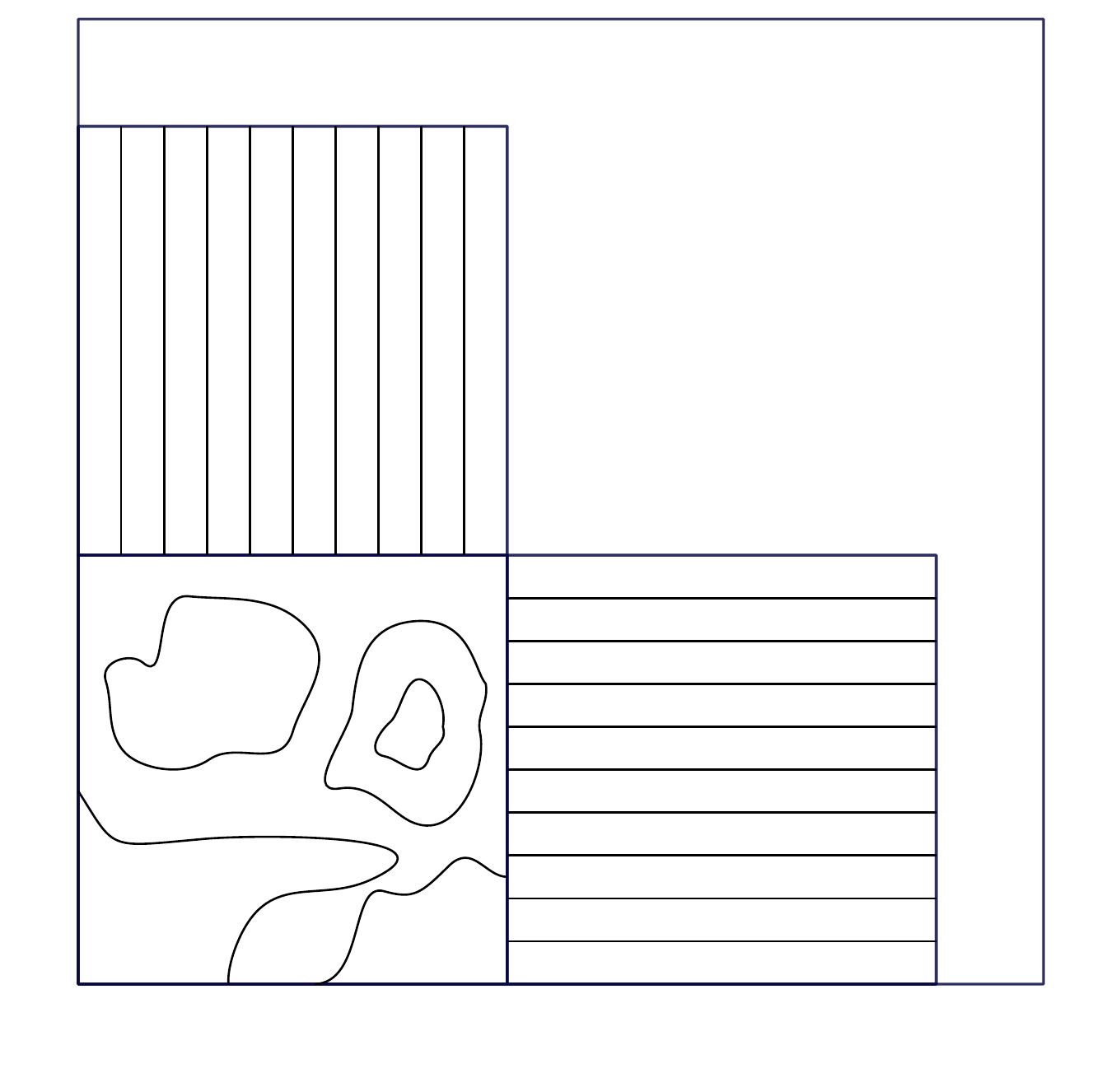
	\caption{An illustration of Lemma \ref{newlenlem}. Derivative bounds for $f$ on $[0,n]^2$ and lower bound for $|f|$ on the $\delta$-separated horizontal and vertical lines in $[T,2T]\times [0,T]$ and $[0,T] \times [T,2T]$ respectively give an upper bound for the nodal length of $f$ in $[0,T]^2$.}
	\label{fig:fig1}
\end{figure}
\noindent The following result is an immediate consequence of Lemma \ref{newlenlem}. 
\begin{Lemma}\label{lem_new} Let $T, M>0$, $n \in \mathbb{N}$ be such that $2T \leq n$ and define $\delta := (2T)^n/n!$. Suppose that $F:\re^2 \ra \re$ is a smooth random function such that almost surely $F$ has no singular zeros, then we have
	\begin{align}\label{eq91}
	\p(\lt \geq 4nT) &\leq \sum_{j=1,2}[\p(\mathscr{E}_j) + \p(\mathscr{F}_j)] + \sum_{r \leq \lfloor \delta^{-1}T \rfloor}[\p(\mathscr{A}_r) +  \p(\mathscr{B}_r)],
	\end{align}
	where the events $\mathscr{E}_j$, $\mathscr{F}_j$, $\mathscr{A}_{r}$ and $\mathscr{B}_{r}$ are defined as follows
	\begin{equation*}
	\begin{gathered}
	\mathscr{E}_j  := \{\| \partial_j F \|_{L^{\infty}[0,n]^2} \geq M/2\},~
	\mathscr{F}_j := \{\| \partial_{j}^{n} F \|_{L^{\infty}[0,n]^2} \geq M/2\},\\
	\mathscr{A}_{r} := \{\| F(\cdot,r\delta) \|_{L^{\infty}[T,2T]} \leq M\delta \},~\mathscr{B}_{r} := \{\| F(r\delta, \cdot) \|_{L^{\infty}[T,2T]} \leq  M \delta \}.
	\end{gathered}
	\end{equation*}
	\end{Lemma}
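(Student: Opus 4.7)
The plan is to deduce the statement directly from Lemma \ref{newlenlem} by taking the contrapositive and applying a union bound; this is essentially bookkeeping rather than a new argument.

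First I would use the hypothesis that $F$ almost surely has no singular zeros to identify $\lt$ with the length of the \emph{nonsingular} part of the nodal set in $[0,T]^2$ a.s. With this identification in hand, Lemma \ref{newlenlem} applied pathwise to a realization of $F$ with the constants $T, M, n$ of the hypothesis gives the deterministic implication: if both \eqref{con1} and \eqref{con2} hold for that realization, then $\lt \le 4nT$. Contrapositively, on the event $\{\lt > 4nT\}$ (and hence, up to the boundary event $\{\lt = 4nT\}$, on $\{\lt \ge 4nT\}$) at least one of the two conditions must fail.

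Next I would enumerate the failure modes and match each to one of the events on the right-hand side of \eqref{eq91}. Failure of \eqref{con1} means that either $\|\partial_2 F\|_{L^\infty[0,n]^2} > M/2$, or $\|\partial_1^n F\|_{L^\infty[0,n]^2} > M/2$, or $\|F(\cdot, r\delta)\|_{L^\infty[T,2T]} \le M\delta$ for some integer $r$ with $0 \le r \le \lfloor \delta^{-1} T \rfloor$; these are contained in $\mathscr{E}_2$, $\mathscr{F}_1$, and $\bigcup_r \mathscr{A}_r$ respectively. Symmetrically, failure of \eqref{con2} is covered by $\mathscr{E}_1 \cup \mathscr{F}_2 \cup \bigcup_r \mathscr{B}_r$. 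Therefore $\{\lt \ge 4nT\}$ is contained in $\bigcup_{j=1,2}(\mathscr{E}_j \cup \mathscr{F}_j) \cup \bigcup_{r \le \lfloor \delta^{-1} T \rfloor}(\mathscr{A}_r \cup \mathscr{B}_r)$, and the union bound delivers \eqref{eq91}.

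I do not anticipate any real obstacle; the only points requiring care are (i) using the no-singular-zeros hypothesis so that Result \ref{lem11}, which bounds the length of the nonsingular nodal set, actually bounds $\lt$ itself, and (ii) verifying that each of the six failure modes in \eqref{con1}--\eqref{con2} is indeed captured by one of the listed events $\mathscr{E}_j$, $\mathscr{F}_j$, $\mathscr{A}_r$, $\mathscr{B}_r$ (paying attention to the strict vs.\ non-strict inequalities, which line up correctly).
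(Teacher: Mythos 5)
Your approach is exactly the one the paper intends; the paper states this lemma without proof, calling it an ``immediate consequence'' of Lemma \ref{newlenlem}, and your enumeration of the failure modes together with a union bound supplies the missing (entirely routine) details. The matching of each negated clause of \eqref{con1}/\eqref{con2} to one of $\mathscr{E}_j$, $\mathscr{F}_j$, $\mathscr{A}_r$, $\mathscr{B}_r$, including the direction of each inequality, is correct, and you are right to flag that the a.s.\ absence of singular zeros is what lets Result \ref{lem11} bound $\lt$ itself rather than merely the length of the nonsingular part of the nodal set.

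The one point you should not leave as a parenthetical is the boundary event. The contrapositive of Lemma \ref{newlenlem} as stated only gives $\{\lt > 4nT\} \subseteq \bigcup_{j}(\mathscr{E}_j \cup \mathscr{F}_j) \cup \bigcup_{r}(\mathscr{A}_r \cup \mathscr{B}_r)$, and one cannot simply pass to $\{\lt \ge 4nT\}$ ``up to the boundary event'': nothing in the hypotheses forces $\p(\lt = 4nT) = 0$, so the half-open inclusion is genuinely weaker than what \eqref{eq91} asserts. The repair is easy and is hiding in the constant: when \eqref{con1} and \eqref{con2} hold, Lemma \ref{newlenlem} gives $N_{1,T}(t), N_{2,T}(t) \le n-1$ for every $t\in[0,T]$, so Result \ref{lem11} yields $\lt \le 2\sqrt{2}\,(n-1)\,T < 4nT$ (even crudely, $2\sqrt{2}\,nT < 4nT$ since $2\sqrt{2}<4$). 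Thus the deterministic conclusion is a strict inequality, its contrapositive covers $\{\lt \ge 4nT\}$ directly, and the union bound then closes the argument with no need to discuss the measure of the boundary set.
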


\subsubsection*{Proof sketch of Theorem \ref{thmtwodim}}  The spectral measure $\mu$ of $X$ satisfies \eqref{a2}, we assume without loss of generality that $v_1 = (1,0)$ and $v_2 = (0,1)$ in \eqref{a2}. Hence both the marginals $\mu_1$ and $\mu_2$ are symmetric probability measures on $\re$ which satisfy \eqref{a1}. 
Let $X_1$ and $X_2$ be centered stationary Gaussian processes on $\re$ whose spectral measures are $\mu_1$ and $\mu_2$ respectively. 

We further assume that $\mu$ is not supported on any line $\re v$, the degenerate case supp$(\mu) \subseteq \re v$ (for some $v \in \re^2$) is much easier and will be analysed  in Section \ref{sectwodim}. In this case, it follows from Bulinskaya's lemma (Result \ref{lembul}) that almost surely $X$ does not have singular zeros. 
Hence we can use Lemma \ref{lem_new} and the stationarity of $X$ to conclude that
\begin{align}\label{eq92}
 \p(\lt \geq 4nT) \leq \sum_{j=1,2}[\p(\mathscr{E}_j) + \p(\mathscr{F}_j)] + \lceil \delta^{-1}T \rceil ~[\p(\mathscr{A}_0) +  \p(\mathscr{B}_0)],
\end{align}
and we now see how to get bounds for the terms on the r.h.s. of \eqref{eq92}. Since $X$ is a stationary Gaussian process, so are all its derivatives. Like in the one dimensional case, estimating $\p(\mathscr{E}_j)$ and $\p(\mathscr{F}_j)$ using 
 the well known tail bounds for supremum of Gaussian processes is quite straightforward and we do this in Section \ref{sectwodim}.

 We now observe that $X(\cdot,0) \overset{d}{=} X_1$ and  $X(0,\cdot) \overset{d}{=} X_2$. Hence $\mathscr{A}_0$ and $\mathscr{B}_0$ correspond to small ball events for $X_1$ and $X_2$ respectively. By our assumption, both their spectral measures satisfy \eqref{a1} and a way to  estimate these small ball probabilities was already discussed in the few lines leading up to \eqref{ssbbpp}. 
 
 Similar to the one dimensional case, for fixed $n,T$ we need to make an optimal choice of $M$ so that the r.h.s. of \eqref{eq92} is minimized.

	\section{Preliminaries}\label{secprelim}

	\noindent {In this section we present some preliminary results and recall other known results which will be used in establishing Theorems \ref{thmonedim} and \ref{thmtwodim}.}

	\subsection{Small Ball Probability} \label{secsmallball} We now get small ball probability estimates for stationary Gaussian processes on $\re$ whose spectral measures satisfy \eqref{a1}. 
	
	\indent Let $X$ be a Gaussian process on an interval $I \st \re$ and let $\{t_j\}_{j=1}^{m} \subset I$ be distinct points in $I$ such that the Gaussian vector  $(X_{t_1},X_{t_2},\ldots,X_{t_m})$ is non-degenerate and let $\phi$ be its density. Then for every $\eta>0$, we have 
	\begin{align}
	\p(\| X \|_{L^{\infty}(I)} \leq \eta) & \leq \p(|X_{t_1}|\leq \eta,\ldots,|X_{t_m}|\leq \eta), \nonumber\\ 
	& = \int_{[-\eta,\eta]^m} \phi (x) dx \leq  2^m  \|\phi\|_{L^{\infty}(\re^m)}~ \eta^m. \label{smallball}
	\end{align}
	For a stationary Gaussian process whose spectral measure satisfies \eqref{a1}, we can get an estimate for  $\|\phi\|_{L^{\infty}(\re^m)}$ which appears in \eqref{smallball} and this is what is done below. 
	\begin{Result}[\cite{FN}, Turan's lemma] \label{Turan} Let $p(t) = \sum_{k=1}^{n} c_k e^{i\lambda_k t}$, where $c_k \in \mathbb{C}$ and $\lambda_k \in \mathbb{R}$. Then there is a constant $ A>0$ such that for every interval $I \subseteq \re$, every measurable set $E \st I$ and every $q \in [0,\infty]$, we have
		\begin{align*}
		\| p\|_{L^{q}(I)} \leq \lb \frac{A|I|}{|E|} \rb ^{n-1} \| p\|_{L^{q}(E)},
		\end{align*}
		where $|\cdot|$ denotes the Lebesgue measure. 
	\end{Result}
	
	\begin{Fact} \label{f1} Let $X$ be a centered stationary Gaussian process on $\re$ whose spectral measure $\mu$ satisfies \eqref{a1}. Hence $\mu$ admits the following decomposition $d\mu(x) = f(x)dx + d\mu_s (x)$, where $f \not \equiv 0$. Then there is  a constant  $\delta_0 >0$ such that $|\{f \geq \delta_0\}| \geq \delta_0$ and hence there is a large enough $M_0 > \pi$  such that $|S|\geq \delta_0/2$, where
		$S := \{f \geq \delta_0\} \cap (-M_0,M_0)$.
		For $m \in \nat$ and $T>0$ consider the stationary Gaussian process $Y$ on $\mathbb{Z}$ defined by
		\begin{align}\label{ydef}
		Y_{\ell} := X_{\ell T/m},~\text{for }\ell \in \mathbb{Z}.
		\end{align}
		Then the spectral measure $\mu_{m,\deltat}$ of $Y$ is  a symmetric probability measure on $[-\pi,\pi]$ which is the push forward of $\mu$ by the map $\psi$ given by 
		\begin{align*}
		\psi: \re \longrightarrow &~ \mathbb{S}^1 \simeq [-\pi,\pi)\\
		 x \longmapsto &~  \frac{Tx}{m} ~(\text{mod}~ 2\pi).
		\end{align*}
	Hence  $\mu_{m,T}$ considered as a measure on $[-\pi,\pi]$ has a nontrivial absolutely continuous part w.r.t. the Lebesgue measure on $[-\pi,\pi]$ given by $f_{m,T}(x) dx$, where 
	\begin{align*}
	f_{m,T} (x) := \sum_{n \in \mathbb{Z}}  \frac{m}{T}~ f\lb \frac{m}{T}(x+ 2\pi n)\rb \geq \frac{m}{T}~ f\lb \frac{mx}{T} \rb,\text{ for $x\in [-\pi,\pi]$}.
	\end{align*}
Define $b := \pi/M_0$, if $\deltat$ and $m$ are such that  $\deltat \leq b m$, then $(T/m)S \subset [-\pi,\pi]$ and hence 	
	\begin{align}\label{sineq}
	|\{f_{m,T} \geq m\delta_0/T\}| \geq |(T/m)S| \geq T\delta_0/2m. 
	\end{align}
	\end{Fact}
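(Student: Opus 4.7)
The plan is to verify the three separate conclusions in sequence: the existence of the constants $\delta_0$ and $M_0$, the identification of $\mu_{m,T}$ as a pushforward with the claimed density $f_{m,T}$, and the final measure bound on a level set of $f_{m,T}$.

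First, construct $\delta_0$ and $M_0$. Since $f\in L^1(\re)$ is non-negative and $f\not\equiv 0$, the set $\{f>0\}$ has positive Lebesgue measure. The level sets $\{f\geq 1/k\}$ increase to $\{f>0\}$ as $k\to\infty$, so by continuity from below $|\{f\geq 1/k\}|\to|\{f>0\}|>0$; picking $k$ large enough that $|\{f\geq 1/k\}|\geq 2/k$ and setting $\delta_0:=1/k$ gives $|\{f\geq \delta_0\}|\geq 2\delta_0\geq \delta_0$. A second application of continuity from below to the nested sets $\{f\geq \delta_0\}\cap(-M,M)$, which exhaust $\{f\geq \delta_0\}$ as $M\to\infty$, yields some $M_0>\pi$ with $|S|\geq \delta_0/2$.

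Second, identify the spectral measure of the subsampled process and compute its density. By stationarity, the covariance of $Y$ equals
\begin{align*}
\e[Y_\ell Y_j]=\widehat{\mu}(T(\ell-j)/m)=\int_{\re}e^{i\xi T(\ell-j)/m}\,d\mu(\xi).
\end{align*}
I partition $\re$ into the fundamental domains $I_n:=[(2n-1)\pi m/T,(2n+1)\pi m/T)$ of the map $\psi$ and use $2\pi$-periodicity of $e^{i\cdot(\ell-j)}$ to recognize the right-hand side as $\int_{-\pi}^{\pi}e^{i\eta(\ell-j)}\,d\mu_{m,T}(\eta)$ with $\mu_{m,T}:=\psi_{*}\mu$. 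Symmetry of $\mu_{m,T}$ follows from symmetry of $\mu$ together with $\psi(-\xi)\equiv -\psi(\xi)\pmod{2\pi}$. For the absolutely continuous part $f(x)\,dx$, I perform the change of variable $y=\psi(\xi)$ on each domain $I_n$ (Jacobian $m/T$) and sum; Tonelli justifies exchanging the sum and the test integral because all terms are non-negative, which produces exactly the series formula for $f_{m,T}$. Dropping the non-negative pushforward of $\mu_s$ gives $d\mu_{m,T}(x)\geq f_{m,T}(x)\,dx$ as measures, and keeping only the $n=0$ summand yields the stated pointwise lower bound $f_{m,T}(x)\geq(m/T)f(mx/T)$.

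Third, derive the level-set estimate. The hypothesis $T\leq bm=\pi m/M_0$ forces $(T/m)(-M_0,M_0)\subseteq(-\pi,\pi)$, so in particular $(T/m)S\subseteq[-\pi,\pi]$. For any $x\in(T/m)S$, the point $mx/T$ lies in $S\subseteq\{f\geq \delta_0\}$, whence $f_{m,T}(x)\geq(m/T)f(mx/T)\geq m\delta_0/T$. This gives the inclusion $(T/m)S\subseteq\{f_{m,T}\geq m\delta_0/T\}\cap[-\pi,\pi]$, and the Lebesgue measure of $(T/m)S$ is simply $(T/m)|S|\geq T\delta_0/(2m)$, establishing \eqref{sineq}. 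The only step that requires genuine care is the change-of-variables bookkeeping that produces $f_{m,T}$ — folding the real line correctly onto $[-\pi,\pi]$ — but once that formula is in hand the remaining arguments are elementary inclusions of sets and rescaling.
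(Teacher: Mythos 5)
Your proof is correct and supplies precisely the elementary bookkeeping (continuity from below to produce $\delta_0$ and $M_0$, fundamental-domain decomposition and the $2\pi$-periodicity of $e^{i\eta(\ell-j)}$ to identify $\mu_{m,T}=\psi_{*}\mu$, Tonelli to obtain the folded density, and the set inclusion $(T/m)S\subseteq\{f_{m,T}\geq m\delta_0/T\}$) that the paper leaves implicit when it presents this as a Fact. This is the same route the paper intends, just spelled out.
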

	
	The following result taken from the proofs of Lemma 3 and Theorem 2 of \cite{MK} is the main tool in obtaining the small ball probability estimates. 
	
	\begin{Result}\label{l2} Let $X$ be a centered stationary Gaussian process on $\re$ whose spectral measure $\mu$ satisfies \eqref{a1}. Let $T>0$, $m \in \nat$ and $Y$ be the  Gaussian process on $\mathbb{Z}$ defined in \eqref{ydef}. Let $\Sigma$ and $\phi$ denote the covariance matrix and density of the Gaussian vector $ (Y_1, \ldots, Y_m)$ respectively. Let $\lambda$ be the smallest eigenvalue of $\Sigma$. Then there exists $b, c \in (0,1)$ and $C>1$ such that whenever $T \leq bm$, we have
		\begin{align*}
		\lambda \geq \lb\frac{cT}{m} \rb^{2(m-1)}\text{ and }~ \|\phi\|_{L^{\infty}(\re^m)} \leq  \left(\frac{Cm}{\deltat}\right)^{m^2}.
		\end{align*}
		\end{Result}
	\begin{proof}  Let $f$, $M_0$, $b$, $\delta_0$ and ${S}$ be as in Fact \ref{f1}. The density $\phi$ is given by
		\begin{align}
		\phi (x) & = \frac{1}{(2\pi)^{m/2} |\Sigma|^{1/2}} \exp\left(-\frac{\langle \Sigma^{-1}x,x \rangle}{2}\right), \nonumber \\
		 \text{hence } \|\phi\|_{L^{\infty}(\re^m)}  & \leq \frac{1}{(2\pi)^{m/2} |\Sigma|^{1/2}} \leq \frac{1}{(2\pi)^{m/2}} \frac{1}{\lambda^{m/2}}. \label{eq103}
		\end{align}
		 We now get a lower bound on $\lambda$ and use this to get an upper bound on $\phi$. Let $u =(u_1,u_2,\ldots,u_m) \in \re^m$ and define $U(x) := \sum_{k=1}^{m}u_k e^{ikx}$. Then we have
		\begin{align*}
		\int_{-\pi}^{\pi} |U(x)|^2 dx  = 2\pi \|u\|^2 \text{ and }
		\int_{-\pi}^{\pi} |U(x)|^2 d\mu_{m,T}(x)  = \langle \Sigma u,u \rangle .
		\end{align*}
		Hence it follows from \eqref{sineq} that 
		\begin{equation}\label{eq100}
		\begin{aligned}
		\langle \Sigma u,u \rangle & = \int_{-\pi}^{\pi} |U(x)|^2 d\mu_{m,T}(x)
		 \geq \int_{-\pi}^{\pi} f_{m,T}(x) |U(x)|^2 dx,\\
		 &  \geq \frac{m}{\deltat}\delta_0 \int_{(T/m)S}  |U(t)|^2 dt. 
		\end{aligned}
		\end{equation}
		We now use Turan's lemma (Result \ref{Turan}) to get 
		\begin{align}
		\int_{(T/m)S} |U(t)|^2 dt \geq \left \lbrace \frac{2\pi A}{|(T/m)S|} \right\rbrace ^{-2(m-1)} \int_{-\pi}^{\pi} |U(t)|^2 dt, \label{eq101}
		\end{align}
		and hence we conclude the following from \eqref{sineq}, \eqref{eq100} and \eqref{eq101}   
		\begin{align}
		\langle \Sigma u, u \rangle \geq \frac{m\delta_0}{\deltat} \left(\frac{\deltat\delta_0}{4\pi mA} \right)^{2(m-1)} 2\pi \|u\|^2, \nonumber \\
		\text{hence, } \lambda \geq 2\pi \cdot \frac{m\delta_0}{\deltat} \left(\frac{\deltat\delta_0}{4\pi mA} \right)^{2(m-1)} \geq \lb \frac{cT}{m}\rb^{2(m-1)}, \label{eq102}
		\end{align}
		for some $c \in (0,1)$. Using the lower bound  we get for $\lambda$ from \eqref{eq102} in \eqref{eq103}, we get the desired result 
		\begin{align*}
		 \|\phi\|_{L^{\infty}(\re^m)} & \leq \left(\frac{Cm}{\deltat}\right)^{m^2}.
		\end{align*}
	\end{proof}
\noindent	The following small ball probability estimate is a consequence of \eqref{smallball} and Result \ref{l2}. 
	\begin{Lemma}[Small ball probability] \label{sbp} Let $X$ be a centered stationary Gaussian process on $\re$ whose spectral measure satisfies \eqref{a1}. Then there are constants $b \in (0,1)$ and $C>1$ (both depending only on $X$) such that for every $\eta >0$ and every {$\deltat >0$, $m \in \nat$ satisfying $\deltat \leq b m$}, we have
		\begin{align}\label{smallballpp}
		\p(\| X \|_{L^{\infty}[0,\deltat]} \leq \eta) \leq \left(\frac{Cm}{\deltat}\right)^{m^2}\eta^m.
		\end{align}
	\end{Lemma}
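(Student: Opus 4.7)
The strategy is a direct combination of the anti-concentration bound \eqref{smallball} with the density estimate supplied by Result \ref{l2}. First I would choose the sampling grid $t_j := jT/m$, $1 \leq j \leq m$, which lives inside $[0,T]$ and is the same equispaced set used to define the auxiliary process $Y_\ell = X_{\ell T/m}$ from Fact \ref{f1}. By stationarity, the Gaussian vector $(X_{t_1},\ldots,X_{t_m})$ has the same covariance matrix $\Sigma$ (and therefore the same density $\phi$) as $(Y_1,\ldots,Y_m)$ in Result \ref{l2}.

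Next I would apply the elementary estimate \eqref{smallball} with this choice of sampling points, giving
\begin{align*}
\p\bigl(\|X\|_{L^{\infty}[0,T]} \leq \eta\bigr) \;\leq\; 2^m \, \|\phi\|_{L^{\infty}(\re^m)} \, \eta^m.
\end{align*}
At this point the entire problem reduces to bounding $\|\phi\|_{L^{\infty}(\re^m)}$, and this is exactly the content of Result \ref{l2}: provided $T \leq bm$ for the constant $b \in (0,1)$ furnished by that result, one has $\|\phi\|_{L^{\infty}(\re^m)} \leq (Cm/T)^{m^2}$ for some $C>1$ depending only on $X$.

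Substituting this bound yields
\begin{align*}
\p\bigl(\|X\|_{L^{\infty}[0,T]} \leq \eta\bigr) \;\leq\; 2^m \left(\frac{Cm}{T}\right)^{m^2}\eta^m,
\end{align*}
and the only remaining step is to absorb the factor $2^m$ into the main term. Since $2^m = (2^{1/m})^{m^2} \leq 2^{m^2}$ for all $m \in \nat$, we may replace $C$ by $2C$ (which still depends only on $X$) to obtain the stated inequality
\begin{align*}
\p\bigl(\|X\|_{L^{\infty}[0,T]} \leq \eta\bigr) \;\leq\; \left(\frac{Cm}{T}\right)^{m^2}\eta^m.
\end{align*}

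\textbf{Where the difficulty lies.} There is essentially no obstacle internal to this lemma: all the substantive work — the use of Turan's lemma to control the smallest eigenvalue of $\Sigma$ under assumption \eqref{a1}, and the passage from a spectral measure on $\re$ to its push-forward on $[-\pi,\pi]$ under the sampling map $\psi$ — was already absorbed into Fact \ref{f1} and Result \ref{l2}. Consequently this lemma is a bookkeeping corollary, and the only care needed is in tracking the hypothesis $T \leq bm$ (inherited from Result \ref{l2}) and in ensuring that the $2^m$ prefactor can be swallowed by enlarging $C$ without disturbing the $m^2$ exponent.
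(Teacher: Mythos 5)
Your proposal is correct and matches the paper's approach exactly: the paper simply asserts that Lemma \ref{sbp} is "a consequence of \eqref{smallball} and Result \ref{l2}" without spelling out the details, and your write-up supplies precisely that deduction, including the observation that by stationarity the sampled vector has covariance $\Sigma$ and the clean absorption of the $2^m$ prefactor into $C$ via $2^m \leq 2^{m^2}$.
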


\noindent In the following Lemma, we get small ball probability estimates by using Lemma \ref{sbp} with specific values of $\eta$, $m$ and $T$. This will be used in the proofs of Theorems \ref{thmonedim} and  \ref{thmtwodim}.
\begin{Lemma}\label{lemsbp} Let $X$, $b$, $C$, $T$ and $m$ be as in Lemma \ref{sbp}. Let $\ep \in (0,1/2)$, $n \in \nat$ be such that $m =\lfloor \ep n\rfloor$, $n \geq 1/\ep^2$. Let $A >1$, $B = (4eAC)^{-1}$, $M= 2A\mathscr{D}_n \mathscr{H}_n$ where $\mathscr{D}_n \geq 1$ and $\mathscr{H}_n$ is defined by
	\begin{align*}
	\mathscr{H}_n := \sqrt{n^2 h_n},\text{ where }h_n := \log \left( \frac{B}{\mathscr{D}_{n}^{1/n}} \cdot \lb \frac{n}{\deltat}\rb^{1-2\ep} \right).
	\end{align*}
	Suppose $h_n \geq 1$, then with $\eta = M(2T)^n/n!$ the estimate in Lemma \ref{sbp} becomes 
	\begin{align*}
		\p(\| X \|_{L^{\infty}[0,\deltat]} \leq M(2T)^n/n!) \leq \exp \left\{ \frac{-\ep n^2}{2} \log \lb \frac{B}{\mathscr{D}_{n}^{1/n}} \cdot \lb \frac{n}{T}\rb^{1-2\ep} \rb \right\}.
	\end{align*}
\end{Lemma}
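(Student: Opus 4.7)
The plan is to apply Lemma \ref{sbp} with $m = \lfloor \ep n \rfloor$ and $\eta = M(2T)^n/n!$, then simplify algebraically. Using the Stirling bound $n! \geq (n/e)^n$ gives $\eta \leq M(2eT/n)^n$, and Lemma \ref{sbp} yields
\begin{align*}
\p\bigl(\|X\|_{L^\infty[0,T]} \leq \eta\bigr) \leq \Bigl(\frac{Cm}{T}\Bigr)^{m^2} M^m \Bigl(\frac{2eT}{n}\Bigr)^{nm}.
\end{align*}
Writing $\alpha := n/T$ and taking logarithms, the goal becomes
\begin{align*}
m^2\log\tfrac{Cm}{n} + nm\log(2e) + m\log M - m(n-m)\log\alpha \leq -\tfrac{\ep n^2}{2}h_n,
\end{align*}
where $h_n = \log B - (1/n)\log\mathscr{D}_n + (1-2\ep)\log\alpha$.

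The main idea is that the dominant negative term $-m(n-m)\log\alpha$ carries a factor-of-two reserve. Since $m = \lfloor\ep n\rfloor \geq \ep n(1-\ep)$ (using $n \geq 1/\ep^2$), one has $m(n-m) \geq \ep n^2(1-\ep)^2 \geq \ep n^2(1-2\ep)$, which is twice the $\log\alpha$-coefficient $\tfrac{\ep n^2(1-2\ep)}{2}$ appearing in $\tfrac{\ep n^2}{2}h_n$. This leaves a reserve of at least $\tfrac{\ep n^2(1-2\ep)}{2}\log\alpha$ to absorb the positive left-hand terms. Rewriting the defining identity as $(1-2\ep)\log\alpha = h_n + \log(4eAC) + (1/n)\log\mathscr{D}_n$ (using $-\log B = \log(4eAC)$), the reserve equals $\tfrac{\ep n^2}{2}h_n + \tfrac{\ep n^2}{2}\log(4eAC) + \tfrac{\ep n}{2}\log\mathscr{D}_n$. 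Combining this with the $\tfrac{\ep n^2}{2}\log B = -\tfrac{\ep n^2}{2}\log(4eAC)$ already on the left, the desired inequality reduces to
\begin{align*}
m^2\log(Cm/n) + nm\log(2e) + m\log M \leq \tfrac{\ep n^2}{2}h_n + \ep n^2\log(4eAC) + \ep n\log\mathscr{D}_n.
\end{align*}

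The remaining step is a term-by-term check using $m \leq \ep n$. Expanding $m\log M = m\log(2A) + m\log\mathscr{D}_n + m\log n + (m/2)\log h_n$ through $\mathscr{H}_n = n\sqrt{h_n}$, each positive piece is comfortably dominated by a matching piece on the right: $m^2\log(Cm/n) \leq \ep^2 n^2\log C \leq \ep n^2\log C$; $nm\log(2e) \leq \ep n^2\log(4e)$; $m\log(2A) \leq \ep n\log(2A) \leq \ep n^2\log A$; and $m\log\mathscr{D}_n \leq \ep n\log\mathscr{D}_n$. The two leftover terms $m\log n \leq \ep n\log n$ and $(m/2)\log h_n \leq (\ep n/2)\log h_n$ are absorbed by the $\tfrac{\ep n^2}{2}h_n$ reserve via the elementary bounds $\log n \leq n/2$ and $\log h_n \leq h_n$ (the latter valid because $h_n \geq 1$), using $n \geq 1/\ep^2 \geq 4$. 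The main obstacle is nothing more than the careful bookkeeping of these logarithmic pieces; the structural point is that the specific choice $B = (4eAC)^{-1}$ arranges exactly the cancellation needed so that the $\log\alpha$-reserve outweighs every accumulated positive contribution.
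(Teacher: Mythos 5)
Your plan follows essentially the same route as the paper: apply Lemma \ref{sbp} with $\eta = M(2T)^n/n!$, invoke $n! \geq (n/e)^n$, exploit $m(n-m)\geq\ep(1-2\ep)n^2$, and use the cancellation built into $B=(4eAC)^{-1}$ to absorb the residual constants. The paper bundles all the constants into the single factor $\bigl(B^{-1}\mathscr{D}_n^{1/n}\bigr)^{mn}\mathscr{H}_n^m$ \emph{before} taking logarithms, which is tidier; your version expands everything out and pays the price in the final bookkeeping, where two of the stated bounds do not in fact hold in the generality the lemma allows.

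First, $\ep n\log(2A)\leq \ep n^2\log A$ requires $\log(2A)\leq n\log A$, i.e.\ $A\geq 2^{1/(n-1)}$; for $A$ just above $1$ and $n=4$ this fails. This is easily repaired: the spare $\ep n^2\log 2$ left over from $nm\log(2e)\leq\ep n^2\log(2e)$ (you only need $\log(2e)$, not $\log(4e)$, for that term) absorbs the $\ep n\log 2$ piece, and $m\log A\leq\ep n^2\log A$ is fine on its own. Second, and more seriously, applying $\log n\leq n/2$ and $\log h_n\leq h_n$ \emph{separately} to $\ep n\log n + \tfrac{\ep n}{2}\log h_n$ overshoots the $\tfrac{\ep n^2}{2}h_n$ reserve: you get $\tfrac{\ep n^2}{2} + \tfrac{\ep n}{2}h_n$, and this exceeds $\tfrac{\ep n^2}{2}h_n$ whenever $h_n < n/(n-1)$ (e.g.\ $n=4$, $h_n=1.2$), which the hypothesis $h_n\geq 1$ permits. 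The correct step — and effectively what the paper does — is to first combine $\ep n\bigl(\log n+\tfrac12\log h_n\bigr)=\ep n\log\mathscr{H}_n\leq\ep n\log(nh_n)$ and then apply $\log x\leq x/2$ to $x=nh_n\geq 4$ in one stroke. With those two repairs your argument closes; the main structural insight (the factor-of-two reserve in $-m(n-m)\log(n/T)$ and the role of $B$) is right and is exactly the paper's.
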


\begin{proof}
	Let $W$ denote the expression in the r.h.s. of \eqref{smallballpp} for $\eta = M(2T)^n/n!$, then  we have
	\begin{align}
	W& = \left( \frac{Cm}{\deltat}\right)^{m^2} \lb \frac{M}{n!} (2\deltat)^n \rb^m,\nonumber\\
	& \leq \frac{C^{mn} n^{m^2}}{\deltat^{m^2}} \cdot \frac{(2e)^{mn}~ \deltat^{mn}~ (2A\mathscr{H}_n \mathscr{D}_n)^m}{n^{nm}}, \nonumber\\
	& \leq  \lb\frac{n}{\deltat}\rb^{m^2 - nm} \cdot (B^{-1}  \mathscr{D}_{n}^{1/n})^{mn} \cdot  \mathscr{H}_{n}^{m}.\label{lline}
	\end{align}
	Since $m = \lfloor \ep n\rfloor$ and $n \geq 1/\ep^2$, we have
	\begin{align*}
	m^2 - mn \leq\ep^2 n^2 -  (\ep n-1)n  = - \ep n^2\lb 1- \ep -(1/\ep n) \rb \leq -\ep (1-2\ep) n^2.
	\end{align*}
	Also because $n/T$, $B^{-1} \mathscr{D}_{n}^{1/n}$ and $\mathscr{H}_n$ are all greater than $1$, we have the following upper bound for the expression on the r.h.s. of \eqref{lline}
	\begin{align}
	W & \leq \exp(-\ep (1-2\ep)n^2 \log(n/T) + \ep n^2 \log (B^{-1} \mathscr{D}_{n}^{1/n})+ \ep n \log \mathscr{H}_n) ) \nonumber, \\
	& \leq  \exp \left\{-\ep n^2 \log \left( \frac{B}{\mathscr{D}_{n}^{1/n}} \cdot \lb \frac{n}{\deltat}\rb^{1-2\ep} \right) + \ep n \log \mathscr{H}_n\right\}, \nonumber\\
	& = \exp(-\ep n^2 h_n + \ep n \log \sqrt{n^2 h_n}),\nonumber\\
	& \leq \exp(-\ep n (n h_n -  \log (n h_n)) \leq \exp(-\ep n^2 h_n/2), \label{eqv3}
	\end{align}
	where the inequality in \eqref{eqv3} follows from our assumption that $h_n \geq 1$ and this establishes our claim. 
	\end{proof}

	\subsection{Metric Entropy and Supremum of Gaussian Processes} \label{secsupGP}
	We now recall well known results which give tail bounds for the supremum of Gaussian processes and we shall use them to get tail bounds for higher derivatives of stationary Gaussian processes.  The definitions and results in this section are taken from Section 3.4 of \cite{PM}. 
	
	Let $\mathscr{T}$ be an index set and  let $(X_t)_{t \in \T}$ be a centered Gaussian process on $\T$. Then $X$ induces a  pseudo-metric $d$ on $\T$ given by
	\begin{equation*}
	d(s,t) := (\e [X_s - X_t]^2)^{1/2}.
	\end{equation*}
	Suppose $(\T,d)$ is totally bounded and let $\ep >0$. The \textit{$\epsilon$-covering number} denoted by $N'(\epsilon,\T)$ is defined to be the minimal number of $\epsilon$-balls required to cover $(\T,d)$. The \textit{$\epsilon$-packing number} denoted by $N(\epsilon, \T)$ is defined to be the maximal possible cardinality of subset $A \subset \T$ which is such that $d(s,t) > \epsilon$, for distinct $s,t \in A$. The quantities $N$ and $N'$ are related by
	\begin{equation}
	N'(\epsilon,\T) \leq N(\epsilon, \T) \leq N' \left(\ep/2,\T\right). \label{eq1}
	\end{equation}
	The \textit{$\epsilon$-entropy number} $H(\epsilon,\T)$ is defined as $H(\epsilon,\T) :=\log N(\epsilon,\T)$. The following results are stated in terms of $N$, but because of \eqref{eq1} similar results also hold with $N$ replaced by $N'$. 
	
	\begin{Result}[\cite{PM}, Theorem 3.18] \label{thment} Let $(X_t)_{t \in \T}$ be a centered Gaussian process and assume that $(\T,d)$ is totally bounded. If $\sqrt{H(\cdot,\T)}$ is integrable at $0$, then $X_t$ admits a version which is almost surely uniformly continuous on $(\T,d)$ and for such a version we have 
		\begin{equation*} 
		\e [\sup_{t \in \T} X_t] \leq 12 \int_{0}^{\sigma} \sqrt{H(x,\T)}~ dx,
		\end{equation*}
		where $\sigma = (\sup_{t \in \T} \e[X_{t}^{2}])^{1/2}$.
		
	\end{Result}
	
	\begin{Fact} For $(X_t)_{t \in \T}$  a centered Gaussian process on an index set $\T$, we have 
		\begin{equation}
		\e[\sup_{t \in \T} |X_t|] \leq \e[|X_0|] + 2 \e[\sup_{t \in \T} X_t]. \label{fact}
		\end{equation}
	\end{Fact}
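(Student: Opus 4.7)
The plan is to reduce the bound on $\e[\sup_{t\in\T}|X_t|]$ to a bound on the signed supremum $\e[\sup_{t\in\T} X_t]$ by a short symmetrization argument that uses only the centering and the distributional invariance $X \stackrel{d}{=} -X$ of a centered Gaussian process. The starting point is the elementary triangle inequality $|X_t| \leq |X_0| + |X_t - X_0|$; taking the supremum over $t\in\T$ and then expectations yields
$$\e\Big[\sup_{t\in\T}|X_t|\Big] \leq \e[|X_0|] + \e\Big[\sup_{t\in\T}|X_t - X_0|\Big].$$
It therefore suffices to establish $\e[\sup_{t\in\T}|X_t - X_0|] \leq 2\,\e[\sup_{t\in\T} X_t]$.

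For this second step I would rewrite $|X_t - X_0| = \max(X_t - X_0,\, X_0 - X_t)$ and observe that, since the process $t \mapsto X_t - X_0$ vanishes at $t = 0$, both of the quantities $\sup_{t\in\T}(X_t - X_0)$ and $\sup_{t\in\T}(X_0 - X_t)$ are nonnegative. Applying the inequality $\max(a,b) \leq a + b$ valid for nonnegative reals gives
$$\sup_{t\in\T}|X_t - X_0| \leq \sup_{t\in\T}(X_t - X_0) + \sup_{t\in\T}(X_0 - X_t).$$
Taking expectations, the distributional identity $(X_t)_{t\in\T} \stackrel{d}{=} (-X_t)_{t\in\T}$ (which holds because $X$ is centered Gaussian) makes the two summands equal. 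Combining this with $\sup_{t\in\T}(X_t - X_0) = \sup_{t\in\T} X_t - X_0$ and $\e[X_0] = 0$, one concludes
$$\e\Big[\sup_{t\in\T}|X_t - X_0|\Big] = 2\,\e\Big[\sup_{t\in\T}(X_t - X_0)\Big] = 2\,\e\Big[\sup_{t\in\T} X_t\Big],$$
which, fed back into the first display, yields the claim.

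There is essentially no serious obstacle: the proof is a two-line symmetrization, and the only small point worth double-checking is the nonnegativity of the two signed suprema above (obtained by evaluating at $t=0$), which is what legitimises the step $\max(a,b)\leq a+b$. If one worries about measurability of $\sup_{t\in\T} X_t$ for a general index set $\T$, I would either restrict to a countable dense subset of $\T$ (which is the standard convention when the process admits a continuous version, as is the case whenever the Dudley bound of Result \ref{thment} is invoked) or replace $\sup$ by essential supremum throughout.
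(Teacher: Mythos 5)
Your argument is correct, and it is essentially the standard symmetrization proof; the paper itself states this Fact without proof (it is imported from \cite{PM}), so there is nothing in the source to compare against. One small slip worth noting: in your last display the first relation should be an inequality $\leq$, not an equality, since it descends from the bound $\sup_{t}|X_t - X_0| \leq \sup_{t}(X_t - X_0) + \sup_{t}(X_0 - X_t)$; this is harmless because the chain ends with exactly the upper bound you need. The two pivotal observations — that evaluating at $t=0$ forces both one-sided suprema to be nonnegative, which legitimises $\max(a,b)\leq a+b$, and that the full process $(X_t)_{t\in\T}$ is distributionally invariant under the global sign flip (not just marginally), which equates the two expectations — are both stated and both correct.
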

	\noindent The following result gives tail bounds for the supremum of a Gaussian process.
	\begin{Result}[\cite{PM}, Proposition 3.19]\label{expconc} Let $(X_t)_{t \in \T}$ be an almost surely continuous centered Gaussian process on the totally bounded set $(\T,d)$. Let $Z$ denote either $\sup_{t \in \T} X_t$ or $\sup_{t \in \T} |X_t|$ and let $\sigma = (\sup_{t \in \T} \e[X_{t}^{2}])^{1/2}$. Then for every $x>0$, we have
		\begin{align*}
		\p(Z - \e[Z] \geq \sigma \sqrt{2x}) \leq \exp(-x),\\
		\p(\e[Z] - Z \geq \sigma \sqrt{2x}) \leq \exp(-x).
		\end{align*}
	\end{Result}
\subsection{Derivatives of a stationary Gaussian process}  We now present some basic facts about  stationary Gaussian processes on $\re$/$\re^2$. Let $X$ be a centered stationary Gaussian process on $\re$/$\re^2$ with spectral measure $\mu$ and covariance function $\e[X_t X_0] = k(t) = \hat{\mu}(t)$. Further assume that $\mu$ is a probability measure all of whose moments $C_n$/$C_{m,n}$ are finite. Then we have the following.
\begin{enumerate}[label={\arabic*.},align=left,leftmargin=*,widest={7}]
	\item For $X$ on $\re$ and $t \in \re$, we have
	\begin{itemize}[align=left,leftmargin=*,widest={7}]
		\item $X$ is smooth and for every $n \in \nat$, $X^{(n)}$ is a centered stationary Gaussian process with  covariance function given by 
		\begin{align} \label{covkerder}
		\e[X^{(n)}_t X^{(n)}_0] = (-1)^n k^{(2n)}(t).
		\end{align}
		\item The derivatives of $k$ are given by
		\begin{align}
		k^{(n)}(t) & = \int_{\re} (ix)^n e^{-itx} d\mu(x), \nonumber \\
		\|k^{(n)}\|_{L^{\infty}(\re)} & \leq \int_{\re} |x|^n  d\mu(x) = C_n. \label{ineq}
		\end{align}
		\end{itemize}
	\item For $X$ on $\re^2$ and  $z=(x,y) \in \re^2$ and $s \in \re^2$, we have
	\begin{itemize}[align=left,leftmargin=*,widest={7}]
		\item $X$ is smooth and for every $m,n \in \nat \cup \{0\}$, $\partial_{1}^{m} \partial_{2}^{n}X$ is a centered stationary Gaussian process with covariance function given by
		\begin{align*}
		\e[\partial_{1}^{m} \partial_{2}^{n}X(z)\cdot  \partial_{1}^{m} \partial_{2}^{n}X(0)] = (-1)^{m+n} \partial_{1}^{2m} \partial_{1}^{2n}k (z).
		\end{align*}
		\item The derivatives of $k$ are given by
			\begin{align*}
		\partial_{1}^m \partial_{2}^{n} k(s) & = \int_{\re^2} (ix)^m (iy)^n e^{i\langle s,z\rangle} d\mu(z),\\
		\Vert \partial_{1}^m \partial_{2}^{n} k\Vert_{L^{\infty}(\re^2)} & \leq \int_{\re^2} |x|^m |y|^n~ d\mu(z)= C_{m,n}.
		\end{align*}
		\end{itemize}
	\end{enumerate} 
\vskip .2cm
Bulinskaya's lemma (\cite{NS}, Lemma 6) for a stationary Gaussian process reads as follows. 
\begin{Result}\label{lembul} Let $X$ be a stationary Gaussian process on $\re^2$ with spectral measure $\mu$ which is a symmetric probability measure on $\re^2$. Then almost surely $X$ does not have any singular zero (a singular zero is a  point $z \in \re^2$ such that $X(z) = |\nabla X(z)| = 0$) if the following conditions holds.
	\begin{itemize}[align=left,leftmargin=*,widest={7}]
		\item $\max\{C_{4,0}, C_{0,4}\} < \infty$.
		\item The Gaussian vector $\nabla X(0) = (\partial_{1} X(0), \partial_{2} X(0))$ is non-degenerate, which holds iff there is no $v \in \re^2$ for which supp$(\mu) \subseteq \re v$. 
		\end{itemize}
		\end{Result}
 \begin{Rem}\label{remasymp} 
	We now see how the moments $L_n$ and $R_n$ defined in \eqref{defns2} are related. We first note that
	\begin{align*}
	C_{m,n} \leq \int_{\re^2} (x^2+y^2)^{\frac{m+n}{2}} d\mu(x,y) = \int_{0}^{\infty} t^{m+n} d\murad(t),
	\end{align*}
	and hence $\widetilde{R}_{n} \leq \widetilde{L}_n$. We also have 
	\begin{align*}
	\widetilde{L}_{n}^{2} &= \int_{\re^2} (x^2 + y^2)^n d\mu(x,y)  \leq \int_{\re^2} 2^n \lb \frac{x^{2n} + y^{2n}}{2}\rb d\mu(x,y) \leq 2^n \widetilde{R}_{n}^{2}
	\end{align*}
	and hence we conclude that 
	\begin{align} \label{momrel}
	R_{n}^{1/n} \leq L_{n}^{1/n} \leq \sqrt{2} R_{n}^{1/n}.
	\end{align}
\end{Rem}

\section{Proof of Theorem \ref{thmonedim}}\label{seconedim}
\subsubsection*{Upper bound} As discussed in Section \ref{secideaone}, we use Lemma \ref{lem2_onedim} to prove the upper bound in Theorem \ref{thmonedim} and hence we only need to estimate the two terms on the r.h.s. of \eqref{main_onedim}. 
	Let $b$, $C$ be as in Lemma \ref{sbp} and define $B := (4eAC)^{-1}$, where $A = 192 \sqrt{\pi}$. Let $\ep \in (0,1/2)$, $n,m \in \nat$ and $T>0$ be such that $n \geq 1/\ep^2,~ m =\lfloor \ep n\rfloor \text{ and } {T\leq b \lfloor  \ep  n \rfloor}$. We also assume that $h_n \geq 1$, where $h_n$ is defined by
	\begin{align*}
	h_n :=  \log \left( \frac{B}{D_{n}^{1/n}} \cdot \lb \frac{n}{\deltat}\rb^{1-2\ep} \right).
	\end{align*}
	With $D_n$ as in \eqref{defns}, we choose $M = 2A H_n D_n$, where $H_n := \sqrt{n^2 h_n}$. 
	
	For this  choice of $M$, we get the following estimate for the  small ball probability in \eqref{main_onedim} from Lemma \ref{lemsbp} 
	\begin{align}
	\p \lb \lVert X \rVert_{L^{\infty}[\deltat, 2\deltat]} \leq \frac{M}{n!}(2\deltat)^n \rb \leq \exp \left\{ \frac{-\ep n^2}{2} \log \lb \frac{B}{{D}_{n}^{1/n}} \cdot \lb \frac{n}{T}\rb^{1-2\ep} \rb \right\}. \label{smallballp}
	\end{align} 

			 We now estimate $\p(\lVert X^{(n)} \rVert_{L^{\infty}[0, 2\deltat]} > M)$, which is the other term in \eqref{main_onedim}, using Results \ref{thment} and \ref{expconc}. 	Let $k := \widehat{\mu}$ be the covariance function of $X$. The following calculation is to estimate the entropy numbers for the process $(X^{(n)}_t)_{t \in [0,2\deltat]}$. For this we first get an estimate of the $\epsilon$-covering number. 
			We denote  by $d_n$ the pseudo-metric on $\re$ induced by $X^{(n)}$. 
			\begin{align}
			d_n(t,0)^2 &= \e[(X^{(n)}_t -X^{(n)}_0)^2] = 2(-1)^n [k^{(2n)}(0)-k^{(2n)}(t)], \nonumber \\
			 & = 2(-1)^{n+1} \int_{0}^{t} [k^{(2n+1)}(s) - k^{(2n+1)}(0)] ds, \nonumber \\
			& \leq 2 \int_{0}^{t} \Vert k^{(2n+2)} \Vert_{L^{\infty}(\re)}~ s ds \leq C_{2n+2}~ t^2, \label{eq300}
			\end{align}
			where the last inequality follows from mean value theorem and \eqref{ineq}. Let $\mathcal{B}_n(x,r)$ denote the ball in $(\re, d_n)$ with center $x$ and radius $r$. Then it follows from \eqref{eq300} that 
			\begin{align*}
		(x-\ep,x+\ep)	\subseteq  \mathcal{B}_n \lb x,\sqrt{C_{2n+2}}~ \ep \rb,
			\end{align*}
		and hence we get the following bound for the  $\epsilon$-covering number $N'(\ep)$ of $([0,2\deltat],d_n)$
			\begin{align}
		N'(\epsilon) & \leq \frac{\sqrt{C_{2n+2}}}{\epsilon}~ 2\deltat. \label{eq6}
		\end{align}	
			We now get a bound on the $\epsilon$-packing number $N(\ep)$ using \eqref{eq1} and \eqref{eq6}
			\begin{align*}
			N(\epsilon) &\leq N' \left(\frac{\epsilon}{2}\right) \leq \frac{4\sqrt{C_{2n+2}}}{\epsilon}~ \deltat = \frac{\beta}{\epsilon},~\mbox{where $\beta =4\sqrt{C_{2n+2}}~\deltat$}.
			\end{align*}
			The above inequality holds for $\epsilon < \beta$ and for $\epsilon \geq \beta$, we have $N(\epsilon)=1$. Hence we have
			$$ H(\epsilon) 
			\begin{cases}
			= 0, & \text{if }\epsilon \geq \beta, \\
			\leq \log \left(\beta /\epsilon\right), & \text{if }\epsilon < \beta.
			\end{cases}
			$$
			\begin{align}\label{dudint}
			 \int_{0} ^{\infty} \sqrt{H(x)} dx = \int_{0} ^{\beta} \sqrt{H(x)} dx &\leq \int_{0} ^{\beta} \sqrt{\log \left(\frac{\beta}{x}  \right)} dx = 2\beta \int_{0}^{\infty} y^2 e^{-y^2} dy  = \sqrt{\pi} \beta.
			\end{align}
	We now conclude from Result \ref{thment} that
		\begin{equation}
		\e [\sup_{t \in [0,2\deltat]} X^{(n)}_t] \leq 12\sqrt{\pi} \beta  = 48\sqrt{\pi} \sqrt{C_{2n+2}}~\deltat.  \label{eq7}
		\end{equation}
	It now follows from \eqref{fact} and \eqref{eq7} that with $A = 192 \sqrt{\pi}$, we have
	\begin{equation}
		\begin{aligned}\label{expcal}
		\e[\sup_{t \in [0,2\deltat]} |X^{(n)}_t|] &\leq \e[|X^{(n)}_0|] + 96\sqrt{\pi} \sqrt{C_{2n+2}}~\deltat,   \\
		& \leq_{\eqref{covkerder}}  \sqrt{|k^{(2n)}(0)|} + 96\sqrt{\pi} \sqrt{C_{2n+2}}~\deltat,  \\
		& \leq 96\sqrt{\pi}~(\sqrt{C_{2n}} + \sqrt{C_{2n+2}}~\deltat),   \\
		& \leq A \max\{\sqrt{C_{2n}}, \sqrt{C_{2n+2}}~\deltat\} \leq A nD_n. 
		\end{aligned} 
		\end{equation}
	It follows from  Result \ref{expconc} that for every $n \in \nat$ and every $x >0$, we have  
		\begin{equation*}
		\p(\sup_{t \in [0,2\deltat]} |X^{(n)}_t| > A nD_n +\sqrt{C_{2n}}~x) \leq  \exp(-x^2/2), 
		\end{equation*}
	\noindent	 and since $M = 2AH_nD_n \geq AnD_n + D_n H_n \geq AnD_n + \sqrt{C_{2n}} H_n$, we conclude that 
		\begin{align}\label{eqm}
		\p(\sup_{t \in [0,2\deltat]} |X^{(n)}_t| \geq M) \leq \exp(-H_{n}^{2}/2) = \exp(-n^2 h_n/2). 
		\end{align}
Hence we conclude from \eqref{main_onedim}, \eqref{smallballp} and \eqref{eqm} that 
		\begin{align*}
		\p(\nt \geq n) \leq 2\exp \left\{-\frac{\ep n^2}{2} \log \left( \frac{B}{D_{n}^{1/n}} \cdot \lb \frac{n}{\deltat}\rb^{1-2\ep} \right) \right\}. 
		\end{align*}

	\subsubsection*{Lower bound}
	\noindent Let $b>0$ be as in Result \ref{l2}.  For $T >0$ and $n\in \nat$ such that $T \leq bn$, we  define the $(n+1)$ dimensional  Gaussian vector $V_n$  by
	\begin{align*}
	V_n := (Y_0, Y_1, Y_2,\ldots,Y_n),
	\end{align*}
	where $Y_k := X_{kT/n}$.  Let $\Sigma$ denote the covariance matrix of $V_n$. The density $\phi_n$ of $V_n$ is 
	\begin{align*}
	\phi_n(x) = \frac{1}{(\sqrt{2\pi})^{n+1}~ |\Sigma|^{1/2}} \exp \lb -\frac{\langle \Sigma^{-1}x,x \rangle}{2}\rb.
	\end{align*}
	Let $\lambda$ be the smallest eigenvalue of $\Sigma$, then we get the following lower bound for $\lambda$ from  Result \ref{l2}. There is $c >0$ such that 
	$\lambda \geq  (c \deltat/n)^{2n}$, hence we have
	\begin{align*}
	\langle \Sigma^{-1}x,x \rangle \leq \frac{1}{\lambda} \|x\|^2 \leq \lb \frac{n}{c \deltat}\rb^{2n} \|x\|^2.
	\end{align*}
	We have from Lemma 3 in \cite{MK} that $|\Sigma| \leq 1$ and hence we have the following lower bound for the density $\phi_n$
	\begin{align*}
	\phi_n (x) \geq \frac{1}{(\sqrt{2\pi})^{n+1}} \exp \lb  -\lb \frac{n}{c \deltat}\rb^{2n} \frac{\|x\|^2}{2} \rb. 
	\end{align*} 
	It follows from \eqref{eq 89} that 
	\begin{align*}
	\p(\nt \geq n) &\geq \frac{1}{(\sqrt{2\pi})^{n+1}}  \int \cdots \int_{-\infty}^{0} \int_{0}^{\infty}  \exp \lb  -\lb \frac{n}{c \deltat}\rb^{2n} \frac{\|x\|^2}{2} \rb dx,\\
	& \geq \frac{1}{(\sqrt{2\pi})^{n+1}} \cdot \lb \frac{\sqrt{2\pi}}{2} \lb \frac{c \deltat}{n}\rb^{n} \rb^{n+1} \geq  \lb \frac{\deltat}{c' n} \rb^{n^2},\\
	& \geq \exp \lb -n^2 \log \lb c'n/T\rb \rb.
	\end{align*}


		\section{Proof of Theorem \ref{thmtwodim}}\label{sectwodim} We continue to work with the {assumptions and the notations of} Section \ref{secideatwo}. The starting point of our analysis here is \eqref{eq92}, which is 
		\begin{align*}
		\p(\lt \geq 4nT) \leq \sum_{j=1,2}[\p(\mathscr{E}_j) + \p(\mathscr{F}_j)] + \lceil \delta^{-1}T \rceil ~[\p(\mathscr{A}_0) +  \p(\mathscr{B}_0)],
		\end{align*} 
		By our assumption, both $X_1$, $X_2$ satisfy the assumptions of Lemma \ref{sbp} and  let $b \in (0,1)$, $C>1$ be such that the conclusion of Lemma \ref{sbp} holds for both $X_1$, $X_2$ with these constants.   
	Define ${B} := (4eAC)^{-1}$, where $A = 384 \sqrt{\pi}$. Let $\ep \in (0,1/4)$, $n,m \in \nat$ and $T>0$ be such that $n \geq 1/\ep^2,~ m =\lfloor \ep n\rfloor, \text{ } {T\leq b \lfloor  \ep  n \rfloor}$ and $\delta := (2T)^n/n!$.  We further assume that $(B/L_{n}^{1/n}) \cdot (n/T)^{1-4\ep} > e$ and since we have from \eqref{momrel} that $L_n \geq R_n$, it follows that $g_n \geq 1$, where $g_n$ is defined by
			\begin{align*}
			g_n :=  \log \left( \frac{{B}}{R_{n}^{1/n}} \cdot \lb \frac{n}{\deltat}\rb^{1-2\ep} \right).
			\end{align*}
		
			We choose $M=2AR_n G_n$, where $G_n := \sqrt{n^2 g_n}$ and estimate the terms on the r.h.s. of \eqref{eq92}. We first estimate the small ball probabilities $\p(\mathscr{A}_0),~\p(\mathscr{B}_0)$ using Lemma \ref{lemsbp} with $\mathscr{D}_n = R_n$ and $\mathscr{H}_n = G_n$
				\begin{align*}
			\p(\mathscr{A}_0),~\p(\mathscr{B}_0) \leq \exp \left\{ \frac{-\ep n^2}{2} \log \lb \frac{B}{R_{n}^{1/n}} \cdot \lb \frac{n}{T}\rb^{1-2\ep} \rb \right\},
			\end{align*}
			and since $\delta^{-1}T = (n! T)/(2T)^n \leq (n/T)^n \cdot (T/2^n) \ll (n/T)^n$, we have
			\begin{align}\label{eq001}
			\lceil	\delta^{-1}T \rceil~ [\p(\mathscr{A}_0) + \p(\mathscr{B}_0)] & \leq  2 \exp \left\{ \frac{-\ep n^2}{2} \log \lb \frac{B}{R_{n}^{1/n}} \cdot \lb \frac{n}{T}\rb^{1-2\ep - \frac{2}{\ep n}} \rb \right\}, \nonumber\\
			& \leq 2 \exp \left\{ \frac{-\ep n^2}{2} \log \lb \frac{B}{R_{n}^{1/n}} \cdot \lb \frac{n}{T}\rb^{1-4\ep} \rb \right\},
			\end{align}
			where the last inequality follows from our assumption that $n \geq 1/\ep^2$.
			
	Like in the one dimensional case, Results \ref{thment} and \ref{expconc} will be used to estimate the probability of the events $\mathscr{E}_j$ and $\mathscr{F}_j$. We now calculate the expected supremum of the process $\partial_{1}^n X$ on $[0,n]^2$. Let $d_n$  be the pseudo-metric on $\re^2$ induced by the Gaussian process $\partial_{1}^n X$, then for $z \in \re^2$ we have
					\begin{align*}
					d_n(z,0)^2 &= \e\left[\partial_{1}^n X(z)- \partial_{1}^n X(0) \right]^2 = 2(-1)^n [\partial_{1}^{2n} k(0) - \partial_{1}^{2n} k(z)]. 
					\end{align*}
					For $s=(s_1,s_2) \in \re^2$, we have
					\begin{align*}
					|\partial_{1}^{2n} k(0) - \partial_{1}^{2n} k(s)| & = |\int_{0}^{1} \nabla\partial_{1}^{2n} k(ts) \cdot s~ dt |, \\ 
					& = | s_1 \int_{0}^{1} [\partial_{1}^{2n+1}k(ts) - \partial_{1}^{2n+1}k(0)]~dt  \\
					& \hspace{1.2cm} + s_2 \int_{0}^{1} [\partial_{1}^{2n} \partial_{2}k(ts) - \partial_{1}^{2n} \partial_{2}k(0)]dt |,\\
					& \leq  |s_1| \int_{0}^{1} (|s_1| C_{2n+2,0} + |s_2| C_{2n+1,1}) ~tdt  \\
					& \hspace{1.2cm} + |s_2| \int_{0}^{1} (|s_1|C_{2n+1,1} + |s_2| C_{2n,2})~tdt,\\
					& \leq  \max\{ C_{2n+2,0}, C_{2n,2}, C_{2n+1,1}\} \|s\|^2. 
					\end{align*}
					Hence we have $d_n(s,0) \leq  \widetilde{R}_{n+1}\Vert s \Vert \leq {R}_{n}\Vert s \Vert$, where $\widetilde{R}_{n}$ and $R_n$ are defined in \eqref{defns2}. We let $D(x,\ell)$ and $\mathcal{B}_{n}(x,\ell)$ denote the balls centered at $x$ with radius $\ell$ in the Euclidean metric and $d_n$ respectively, then $D(x,\ep/R_n) \subseteq \mathcal{B}_n(x,\ep)$ and hence $N'(\ep)$, the $\ep$-covering number of $([0,n]^2,d_n)$, has the following upper bound $N'(\ep) \leq  {4R_{n}^{2}}n^2/\ep^2$ and hence the $\ep$-packing number $N(\ep)$ satisfies $N(\ep) \leq {8R_{n}^{2}}n^2/\ep^2$. Similar to \eqref{dudint}, we can get the following estimate for the Dudley integral in this case also
					\begin{align*}
					\int_{0}^{\infty} \sqrt{\log N(x)}~ dx \leq 4\sqrt{\pi}n R_n. 
					\end{align*}
					We thus conclude from Result \ref{thment} that
					\begin{equation*}
					\e [\sup_{ [0,n]^2} \partial_{1}^{n} X] \leq  48\sqrt{\pi}nR_n, 
					\end{equation*}
and by a calculation similar to \eqref{expcal}, we conclude that with $A= 384\sqrt{\pi}$ we have
\begin{align*}
\e [~\|\partial_{1}^{n} X\|_{L^{\infty}[0,n]^2}] \leq  AnR_n/2,
\end{align*}
			thus we get the following tail bound from Result \ref{expconc} and the fact that $R_n \geq \sqrt{C_{2n,0}}$
			\begin{align*}	
			\p(\|\partial_{1}^{n} X\|_{L^{\infty}[0,n]^2} > (A nR_n/2) + x R_n) \leq  \exp(-x^2/2). 
			\end{align*}
			For $j=1,2$ and $Y = \|\partial_{j} X\|_{L^{\infty}[0,n]^2}$, $\|\partial_{j}^{n} X\|_{L^{\infty}[0,n]^2}$, similar calculations as above yield the following tail bounds. For every $x>0$, we have
			\begin{align}	\label{tailbound}
			\p(Y > (A nR_n/2) + x R_n) \leq  \exp(-x^2/2). 
			\end{align}
		Since $M/2 = AG_n R_n \geq (AnR_n/2) + G_n R_n$, it follows from \eqref{tailbound} that 
			\begin{align}\label{tbfinal}
			\p(Y > M/2) \leq \exp(-G_{n}^{2}/2) \leq \exp(-n^2 g_n/2).
			\end{align}
				
				It now follows from \eqref{eq92}, \eqref{eq001} and \eqref{tbfinal} that 
				\begin{align}\label{lasteq}
				\p(\lt \geq 4nT) \leq 6 \exp \left\{ \frac{-\ep n^2}{2} \log \lb \frac{B}{R_{n}^{1/n}} \cdot \lb \frac{n}{T}\rb^{1-4\ep} \rb \right\}.
				\end{align}
				We have from \eqref{momrel} that $\lnn \geq R_{n}^{1/n}$ and now the desired result follows from \eqref{lasteq}.
				
\vskip .2cm
Now suppose that there exists $v \in \re^2$ such that  supp$(\mu) \subseteq \re v$,  without loss of generality we may assume that $v =(1,0)$. In this case $\mu$ considered as a measure on $\re$ satisfies \eqref{a1} and almost surely for every $(x,y) \in \re^2$, we have $X(x,y) = X(x,0)$ and hence the zero set of $X$ is $\cup_{t: X(t,0)=0}\{(t,y): y\in \re\}$. If we let $\nt$ denote the zero count of $X(\cdot, 0)$ in $[0,T]$, we have $\lt \geq nT$ iff $\nt \geq n$. The result now follows by using the estimates for $\p(\nt \geq n)$ obtained in Theorem \ref{thmonedim}.

\section{Consequences of Theorems \ref{thmonedim} $\&$ \ref{thmtwodim}}\label{seccons}

\subsection{One dimension} In this section, we deduce from Theorem \ref{thmonedim} the tail estimates for the zero count given in Table \ref{table1}.
\begin{enumerate}[label={\arabic*.},align=left,leftmargin=*,widest={7}]
	\item Let $\mu$ be compactly supported, say supp$(\mu) \subset [-q,q]$ for some $q>1$. Then $D_n \leq q^{2n}$ and  hence $D_{n}^{1/n} \leq q^2$. Taking $\ep = 1/4$ in \eqref{mainestimate0}, we conclude that there  exists $c,C>0$ such that whenever $T \geq 1$  and $n \geq CT $ we have
	\begin{align}\label{case1}
	\p(\nt \geq n)& \lesssim \exp \lb- cn^2 \log \left( n/\deltat \right) \rb.
	\end{align}
	\textit{Moment bounds.} We use \eqref{case1} to get  bounds for the moments of $\nt$, for $m \in \nat$ we have
	\begin{equation}
	\begin{aligned}\label{momentbounds}
	\e[\nt^m] & \lesssim (CT)^m + \sum_{n \geq CT} n^m e^{-cn^2},\\
	& \leq (CT)^m + (\sup_{x\geq 0} x^m e^{-c x^2/2}) \sum_{n \geq 1} e^{-cn^2/2},\\
	& \lesssim (CT)^m + (m/c)^{m/2} \leq (\tilde{c}(T \vee \sqrt{m}))^{m}.		
	\end{aligned}
	\end{equation}
	\item Let $\mu$ be such that  $C_n \lesssim n^{\alpha n }$ for some $\alpha>0$, then $D_{n}^{1/n} \lesssim n^{\alpha}$. 
	We consider the cases $\alpha <1$ and $\alpha \geq 1$ separately and make the following conclusions  from \eqref{mainestimate0}. 
	\begin{itemize}[align=left,leftmargin=*,widest={7}]
		\item Let $\alpha <1$, then for every $\kappa \in (0, 1-\alpha)$ there is  $c_{\kappa} >0$ such that whenever $ \deltat \leq n^{\kappa} $, we have
		\begin{align}\label{eqqq}
		\p(\nt \geq n) \lesssim  \exp({-c_{\kappa} n^2  \log n}). 
		\end{align}
		\textit{Justification for \eqref{eqqq}.} Let $\kappa',\kappa''$ be such $\kappa < \kappa' < \kappa'' < (1-\alpha)$, then we can choose $\ep \in (0,(1-\alpha)/2)$ such that $(1-\alpha - 2\ep)/(1- 2\ep) = \kappa''$ and hence for this choice of $\ep$ and large enough $n$, we have
		\begin{align*}
		\lb \frac{B}{n^{\alpha}} \lb \frac{n}{T}\rb^{1-2\ep}\rb^{\frac{1}{1-2\ep}} = B^{\frac{1}{1-2\ep}} \frac{n^{\frac{1-\alpha - 2\ep}{1-2\ep}}}{\deltat} \geq \frac{n^{\kappa'}}{\deltat} \geq \frac{n^{\kappa'}}{n^{\kappa}} = n^{\kappa' - \kappa},
		\end{align*}
		and hence 
		\begin{align*}
		\frac{B}{n^{\alpha}} \lb \frac{n}{T}\rb^{1-2\ep} \geq n^{(\kappa' - \kappa)(1-2\ep)} \geq e.
		\end{align*}
		And if we choose $n$ large enough, the fact that $T \leq n^{\kappa}$ will imply that $T \leq b\lfloor \ep n \rfloor$ and hence all the conditions of Theorem \ref{thmonedim} are satisfied. Using $\ep$ chosen above in \eqref{mainestimate0} and letting $c_{\kappa} = \ep(1-2\ep)(\kappa'- \kappa)/2$, we get \eqref{eqqq}. Similar arguments can also be used to justify the tail bounds we get for $\nt$ in the other cases, namely \eqref{case3a} and \eqref{case4a}. 
		\vskip .2cm
		\textit{Moment bounds.} By a calculation similar to \eqref{momentbounds}, we conclude using \eqref{eqqq} that 
		\begin{align*}
		\e[\nn_{T}^{m}] \leq (c'_{\kappa}(T^{1/\kappa} \vee \sqrt{m}))^{m}.
		\end{align*}
		\vskip .3 cm
		\item Let $\alpha \geq 1$, then for every $\kappa >0$ there exists $c_{\kappa}>0$ such that  whenever $n$ is large enough and $\deltat \leq 1/n^{\alpha -1+ \kappa} $, we have
		\begin{align}\label{case3a}
		\p(\nt \geq n) &\lesssim  \exp \lb -c_{\kappa} n^2  \log \lb \frac{e}{\deltat n^{\alpha -1 + \kappa}}\rb \rb.
		\end{align}
		We write $[0,1]$ as a union of $\lceil n^{\alpha-1+\kappa} \rceil$ many subintervals $\{I_{\ell}\}$,  each  of length $n^{-(\alpha-1+\kappa)}$. Then if $\nn_1 \geq  n^{\alpha + \kappa}$, at least one of the intervals $I_{\ell}$ must contain more than $n$ zeros and hence a union bound gives
		\begin{align}\label{unionbound}
		\p(\nn_1 \geq  n\cdot n^{\alpha-1+\kappa}) \leq \sum_{\ell}\p(\nn_{n^{-(\alpha-1+\kappa)}} \geq n) \lesssim n^{\alpha-1+\kappa} \exp(-c_{\kappa}n^2),
		\end{align}
		and hence there exists  $\tilde{c}_{\kappa} >0$ such that 
		\begin{align}\label{case3}
		\p(\mathscr{N}_1 \geq  n) &\lesssim \exp({-\tilde{c}_{\kappa}~ n^{\frac{2}{\alpha+\kappa}}  }).
		\end{align}
		\textit{Moment bounds.}	Using \eqref{case3} we get the following moment bounds for $\nn_1$. There exists $c_{\kappa} >0$ such that  for every $m \in \nat$, we have
		\begin{align*}
		\e[\nn_{1}^{m}] \leq (c_{\kappa} m^{\frac{\alpha + \kappa}{2}})^m.
		\end{align*}
		
	\end{itemize}
	
	\item Let $\mu$ be such that $\log C_n \lesssim n^{1+\frac{1}{\gamma}}$ for some $\gamma >1/2$. 
	We conclude from \eqref{mainestimate0} that there exist constants $c, c' >0$ such that whenever $n$ is large enough and $T \leq e^{-c n^{1/\ga}}$, we have
	\begin{align}\label{case4a}
	\p(\nt \geq n) \lesssim \exp(-c' n^2),
	\end{align}
	and hence by partitioning $[0,1]$ into subintervals of length $e^{-c n^{1/\ga}}$, using the above estimate in each of them and a union bound as in \eqref{unionbound} implies there is $\tilde{c}>0$ such that 
	\begin{align} \label{case4b}
	\p(\nn_1 \geq  n) \lesssim \exp(-\tilde{c}(\log n)^{2\ga}). 
	\end{align}
	\textit{Moment bounds.} We use \eqref{case4b} to conclude that there is $c>0$ such that for every $m \in \nat$, we have 
	\begin{align*}
	\e[\nn_{1}^{m}] \leq \exp\lb cm^{1+\frac{1}{2\ga -1}}\rb.
	\end{align*}
\end{enumerate}

\subsection{Two dimensions}
We now illustrate how the overcrowding estimates for nodal length given in Table \ref{table2} are deduced from Theorem \ref{thmtwodim}. Since this analysis is similar to  the one dimensional case considered above, we do this only for the first example in Table \ref{table2}.

Let $\mu$ be compactly supported, say supp$(\mu) \subset q\mathbb{D}$ for some $q>1$. Then $\lnn \lesssim q$ and hence by taking $\ep = 1/8$ in \eqref{mainest3} and by letting $\ell = 4 nT$, we conclude that there are constants $c,C>0$ such that whenever $T\geq 1$ and $\ell \geq CT^2$ is large enough, we have
\begin{align*}
\p(\lt \geq \ell)& \lesssim  \exp \lb- \frac{c \ell^2}{T^2} \log \left( \ell/\deltat^2 \right) \rb.
\end{align*}
\textit{Moment bounds.}	We get moment bounds using the above tail estimates. For $m \in \nat$, 
\begin{align*}
\e[\lt^m] &= \int_{0}^{\infty} mx^{m-1} \p(\lt \geq x) dx
\lesssim (CT^2)^m + m \int_{CT^2}^{\infty} x^{m-1} e^{-c x^2/T^2} dx,\\
& \lesssim (CT^2)^m + m (T/\sqrt{c})^m \int_{0}^{\infty} y^{m-1} e^{-y^2} dy \leq (c'T)^m (T \vee \sqrt{m})^m.
\end{align*}

\section*{Acknowledgements}
\noindent This work was carried out during my Ph.D. under the guidance of  Manjunath Krishnapur. I thank him for suggesting me the questions considered in this paper, his patience and encouragement throughout this study and for being very generous with his time and  insights on the subject of this paper. I also thank Riddhipratim Basu for helpful discussions. 

	\bibliographystyle{plain}
	\bibliography{reference_oc}
	\end{document}